\newtheorem{thm}{Theorem}[section]
\newtheorem{cor}[thm]{Corollary}
\newtheorem{lem}[thm]{Lemma}
\newtheorem{prop}[thm]{Proposition}
\theoremstyle{definition}
\newtheorem{rem}[thm]{Remark}  
\numberwithin{equation}{section}
\newcommand{\R}{\mathbb R}
\newcommand{\To}{\longrightarrow}
\newcommand{\PP}{\mathbb{P}}
\newcommand{\C}{\mathbb{C}}
\newcommand{\Z}{\mathbb{Z}}
\newcommand{\inv}{^{-1}}
\newcommand{\x}{\times}
\newcommand{\B}{{\mathcal B}}
\newcommand{\bd}{{\operatorname{bd}}}
\begin{document}
\title[The totally nonnegative part of $G/P$ is a CW complex]
{The totally nonnegative part of $G/P$ is a CW complex.}
\author{Konstanze Rietsch}%
\author{Lauren Williams}
\address{Department of Mathematics,
            King's College London,
            Strand, London
            WC2R 2LS}%
\address{Department of Mathematics,
            Harvard University,
            1 Oxford Street,
            Cambridge, MA 02138}
\email{konstanze.rietsch@kcl.ac.uk}%
\email{lauren@math.harvard.edu}
\dedicatory{Dedicated to Professor Bertram Kostant on the occasion
of his 80th birthday}
\thanks{
The first author is funded by 
EPSRC grant EP/D071305/1.  The second author is partially 
supported by the NSF}%
\subjclass[2000]{14m15; 20G15}%
\keywords{Algebraic groups, partial flag varieties, 
total positivity.}
%
\begin{abstract}
The totally nonnegative part  of a partial flag variety $G/P$ 
has been shown in \cite{Rie:CelDec,Rie:PhD} to be a union
of semi-algbraic cells. Moreover the closure of a cell was shown
in \cite{Rie:TotPosPoset} to be a union of smaller cells. In this note
we provide glueing maps for each of the cells to prove
that  $(G/P)_{\ge 0}$ is a CW complex.
This generalizes a result of Postnikov, Speyer and the second
author \cite{PSW} for Grassmannians.
 
\end{abstract}
\maketitle

\section{Introduction} 

In a reductive algebraic group over $\C$ split over $\R$ with a fixed
choice of Chevalley generators in the Lie algebra, there is a well-defined
notion of positive, or $\R_{>0}$-valued points due to Lusztig
\cite{Lus:TotPos94}.  In the case of $GL_n$ with the standard choices,
the resulting ``$GL_n(\R_{>0})$'' recovers the classical notion of 
totally positive matrices, that is, matrices such that all minors 
are positive.  For general $G$ the set $G_{>0}$ is therefore 
called the totally positive part of $G$.  The closure $G_{\geq 0}$
of $G_{>0}$ (in the real topology) is called the totally nonnegative
part of $G$.

These notions have a natural extension to flag varieties $G/P$.  That is,
there is a notion of $(G/P)_{>0}$, and of $(G/P)_{\geq 0}$, the 
closure of 
$(G/P)_{>0}$, which Lusztig has described as a 
``remarkable polyhedral subspace" \cite{Lus:TotPos94}.
Lusztig has proved that $(G/P)_{\geq 0}$ is contractible
\cite{Lus:IntroTotPos} and the first author
has shown that it is a union of semi-algebraic cells \cite{Rie:CelDec,Rie:PhD}.
Moreover, 
in \cite{Rie:TotPosPoset} the first author showed that the 
closure of a totally 
nonnegative cell in $G/P$ is a union of totally nonnegative cells
and described the closure relations in terms of the 
Weyl group. The combinatorics of these closure
relations was then studied by the second author in \cite{Williams:poset},
where it was shown that the partially ordered set (poset) of cells of 
$(G/P)_{\geq 0}$ is in fact the poset of cells of a
regular CW complex.  Recall that a CW complex is a union of cells with additional
requirements on how cells are {\it glued}; a  
{\it regular} CW complex is one
where the closure of each cell is homeomorphic to a closed ball
and the closure minus the interior of each cell is homeomorphic to a sphere.
The combinatorial results of \cite{Williams:poset}
prompted the second author to conjecture  
that $(G/P)_{\ge 0}$ is a regular CW complex, which in particular 
would imply that $(G/P)_{\geq 0}$ is homeomorphic to a closed ball.

In \cite{PSW}, Postnikov, Speyer, and the second author proved that
the non-negative part of the Grassmannian is a CW complex, by introducing
an auxiliary toric variety to each parameterization of a cell, and 
constructing a glueing  map from the non-negative part of that toric 
variety to the closure of the corresponding cell.  The construction
of the toric variety and glueing map relied on 
explicit positivity properties
of the parameterizations of the cells, which had been described in terms
of certain graphs in 
\cite{Postnikov}. 

In this paper we generalize the previous result and show
that the non-negative part of any flag variety
$(G/P)_{\ge 0}$ is a CW complex.  As in \cite{PSW}, we again 
construct a toric variety for each parameterization of a cell.  
However, in our proof we use the parameterizations of the cells due to Marsh 
and the first author \cite{MarRie:ansatz}, and use Lusztig's canonical basis \cite{Lus:CanonBasis} 
in order to prove that they have the desired positivity properties. 
Once we have proved that 
$(G/P)_{\ge 0}$ is a CW complex,
the combinatorics from \cite{Williams:poset} 
implies that the closures of the individual cells have Euler
characteristic one. 

The following result is our main theorem.

\begin{thm}\label{th:main}
$(G/P)_{\ge 0}$ is a CW complex.
\end{thm}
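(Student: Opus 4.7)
The plan is to emulate the strategy of \cite{PSW} at the level of an arbitrary $G/P$: for each cell, I would produce a continuous attaching map from a closed topological ball onto the cell's closure, restricting to a homeomorphism on the interior. Together with the poset-theoretic results of \cite{Williams:poset} and the closure description of \cite{Rie:TotPosPoset}, these attaching maps will assemble into the desired CW structure on the compact Hausdorff space $(G/P)_{\ge 0}$.

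Fix a cell $\mathcal{C}$ of dimension $d$ and take the Marsh--Rietsch parameterization $\phi \colon \R_{>0}^d \to \mathcal{C}$ from \cite{MarRie:ansatz}; this is a homeomorphism given by a specific product of one-parameter subgroups determined by a reduced expression. To compactify the source, I would associate to this reduced expression a projective toric variety $Y$ whose open torus is the complexification of $\R_{>0}^d$; then $Y_{\ge 0}$ is homeomorphic to its moment polytope, in particular to a closed ball of dimension $d$. The heart of the matter is then to extend $\phi$ to a continuous map $\bar\phi \colon Y_{\ge 0} \to (G/P)_{\ge 0}$ whose image is precisely $\overline{\mathcal{C}}$.

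To construct $\bar\phi$ I would work in a suitable ambient projective embedding of $G/P$, for instance a generalized Pl\"ucker embedding coming from fundamental representations of $G$. The coordinates of $\phi(t)$ in this embedding are certain polynomial expressions in $t_1, \ldots, t_d \in \R_{>0}$, and Lusztig's canonical basis \cite{Lus:CanonBasis} supplies a distinguished basis with respect to which these coordinates satisfy the crucial positivity property of being subtraction-free, indeed polynomials with non-negative integer coefficients in the $t_i$. Subtraction-freeness is exactly what one needs for the extension: after normalizing (projectively) by an appropriate monomial, the expression extends continuously to each toric boundary stratum of $Y_{\ge 0}$, and on such a stratum it reduces to the Marsh--Rietsch parameterization of a lower-dimensional cell associated to a subword. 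Positivity guarantees the image always lies in $(G/P)_{\ge 0}$, and surjectivity of $\bar\phi$ onto $\overline{\mathcal{C}}$ follows by combining this identification of boundary strata with the closure relations of \cite{Rie:TotPosPoset}.

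The main obstacle I anticipate is the boundary analysis of $\bar\phi$: one must verify that the limiting values along each toric face genuinely coincide with Marsh--Rietsch parameterizations of the predicted lower cells, in accordance with the poset structure of \cite{Rie:TotPosPoset}. This is a combinatorial-geometric compatibility check relating degenerations of reduced expressions to closure relations in the totally nonnegative poset, and it is here that the canonical basis input does real work, since without a subtraction-free presentation one could not control what happens as some $t_i \to 0$ in a projective quotient. Once this compatibility is in hand, $\bar\phi$ is a continuous surjection from a closed ball onto $\overline{\mathcal{C}}$ that restricts to a homeomorphism on the interior; carrying out this construction cell by cell furnishes the CW structure on $(G/P)_{\ge 0}$.
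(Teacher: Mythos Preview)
Your outline is essentially the paper's own strategy: Marsh--Rietsch parameterizations, positivity via Lusztig's canonical basis in a projective embedding, a toric variety whose nonnegative part is a ball, and then an extension to the closure. Two points of difference in execution are worth flagging. First, the ``main obstacle'' you anticipate---identifying the limit along each toric face with the Marsh--Rietsch parameterization of a specific lower cell---is not what the paper does, and you should not attempt it: the paper never tracks which lower cell each face hits. Instead it uses a soft argument: once the coordinates are polynomials with nonnegative coefficients, Lemma~\ref{important} gives a continuous extension $\tau_{\ge 0}$ automatically; since $\tau_{\ge 0}$ is a homeomorphism on interiors, a short sequence argument forces $\tau_{\ge 0}(\partial X^{\ge 0})\subset \partial\overline{\mathcal R_{v,w}^{>0}}$, and the fact that the boundary of the closure lies in the $(d-1)$-skeleton comes from the closure relations of \cite{Rie:TotPosPoset} rather than from any face-by-face matching. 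This is both easier and more robust than your proposed compatibility check. Second, you do not say how you get positivity in the non-simply-laced case; the canonical-basis argument only works directly for simply laced $G$, and the paper handles general $G$ by folding from a simply laced cover $\dot G$ (Section~\ref{s:automorphism}, Lemmas~\ref{l:folding} and~\ref{l:folding2}). Finally, the paper proves everything first for $G/B$ and then obtains $G/P$ by composing with the projection $\pi^J$, rather than working in $G/P$ directly.
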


In \cite{Williams:poset}, the second author proved the following result.

\begin{thm}\cite{Williams:poset} \label{th:Eulerian}
The poset of cells of $(G/P)_{\ge 0}$ is the poset of cells of some
regular CW complex; therefore the poset
of cells of $(G/P)_{\ge 0}$ is Eulerian.
\end{thm}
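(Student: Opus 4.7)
The plan is to deduce Theorem \ref{th:Eulerian} from a criterion of Bj\"orner for a graded poset to be the face poset of a regular CW complex, and then to obtain the Eulerian conclusion as a formal consequence. The starting point is the combinatorial model for the cell poset $Q^P$ of $(G/P)_{\geq 0}$ coming from \cite{Rie:TotPosPoset}: cells are indexed by pairs $(v,w)$ with $w \in W^P$ a minimal coset representative for $W/W_P$ and $v \leq w$ in Bruhat order, with cover relations inherited from (a parabolic quotient of) the ``doubled'' Bruhat order on such pairs.

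First I would verify that $Q^P$, with a minimum $\hat{0}$ adjoined, is graded with rank function $(v,w) \mapsto \ell(w) - \ell(v)$. This is a direct check on covers using the standard properties of the parabolic Bruhat order (the Lifting Lemma and the fact that $W^P$ inherits a graded structure from $W$), together with the explicit description of covers in \cite{Rie:TotPosPoset}.

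The heart of the argument is to show that every open interval in $Q^P \cup \{\hat{0}\}$ has the homotopy type of a sphere of the appropriate dimension. The natural strategy is to construct an EL-labelling (or, if needed, a CL-labelling) on $Q^P$, adapting the classical EL-labellings on Bruhat order and its parabolic quotients due to Bj\"orner--Wachs and Dyer. Because the elements of $Q^P$ are pairs rather than single Weyl group elements, the labels must simultaneously control the ``upper'' element $w$ and the ``lower'' element $v$; I would try to combine the reflection-order EL-labelling on $W^P$ for $w$-moves with a dual labelling for $v$-moves, and then show that each open interval has a unique rising chain, hence is shellable and has the homotopy type of a wedge of spheres in a single top dimension. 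The main obstacle is precisely this step: designing a labelling whose falling chains in an arbitrary interval can be counted (and shown to concentrate in the top dimension), since covers come in two flavors and can interact nontrivially across the pair.

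Given the sphericity of open intervals, Bj\"orner's theorem (a finite graded poset with $\hat{0}$ is the face poset of a regular CW complex iff for every $y$ the order complex of $(\hat{0},y)$ is a sphere of the correct dimension) yields the first assertion of Theorem \ref{th:Eulerian}. The Eulerian property then follows immediately: for the face poset of a regular CW decomposition of a sphere one has $\mu(x,y) = (-1)^{\rho(y)-\rho(x)}$ on every interval, which is the defining condition of an Eulerian poset.
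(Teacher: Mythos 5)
Theorem \ref{th:Eulerian} is not proved in this paper at all: it is quoted verbatim from \cite{Williams:poset}, so the only meaningful comparison is with that paper's argument. Your outline does follow the same strategy as the cited proof (adjoin $\hat 0$ and $\hat 1$ to the cell poset $Q^J$, check gradedness, build a lexicographic shelling out of reflection orders in the spirit of Bj\"orner--Wachs and Dyer, and invoke Bj\"orner's characterization of face posets of regular CW complexes). But as a proof it has a genuine gap exactly where you flag ``the main obstacle'': the construction of an EL-labelling of the interval poset of pairs/triples, and the verification that every interval has a unique rising (and controllable falling) chain, is the entire content of \cite{Williams:poset}; nothing in your sketch supplies it, and the cover relations of $Q^J$ (coming from the closure relations of \cite{Rie:TotPosPoset}) are genuinely more delicate than a componentwise or ``doubled'' Bruhat order, so one cannot simply transplant the known labellings of $W$ or $W^J$.

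There is also a technical error in the way you invoke Bj\"orner's theorem. The criterion for a graded poset with $\hat 0$ to be a CW poset is that each open interval $(\hat 0,y)$ be \emph{homeomorphic} to a sphere of the right dimension, not merely homotopy equivalent to one; shellability by itself only gives the homotopy type of a wedge of top-dimensional spheres. The standard (and the cited paper's) repair is to prove in addition that the poset is \emph{thin} (every length-two interval has exactly four elements): a thin, shellable, graded poset has all open intervals PL spheres by Danaraj--Klee, and then Bj\"orner's theorem applies. Thinness never appears in your proposal, yet it is also the ultimate source of the Eulerian property. Finally, your last step deduces Eulerianness from the M\"obius function of ``a regular CW decomposition of a sphere,'' but the complex produced here is not a sphere (it is expected to be a ball, and being a face poset of a regular CW complex alone is not the hypothesis you want to quote); the correct deduction is that $\mu(x,y)=(-1)^{\rho(y)-\rho(x)}$ on every interval because each open interval is a (PL) sphere, i.e.\ again via thin plus shellable, which is how \cite{Williams:poset} concludes.
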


In other words, the alternating sum of cells in the closure of a cell 
of $(G/P)_{\ge 0}$ is $1$.  
This result combined with Theorem \ref{th:main} implies the following.

\begin{cor}
The Euler characteristic of the closure of a cell of 
$(G/P)_{\ge 0}$ is $1$.  
\end{cor}

The structure of this paper is as follows.  In Section \ref{s:prelim}, we review
basic results on algebraic groups and flag varieties.  In Sections \ref{s:TotPosGB}
and \ref{s:Toric} we introduce the notion of total positivity for real reductive groups
and flag varieties, and toric varieties, respectively.  In Section \ref{s:ConstructToric}
we construct a toric variety associated to a parameterization of a cell.  In Section
\ref{s:proof} we prove a key proposition,
and 
in Section \ref{s:Partial}, we prove the main result.


\section{Preliminaries}\label{s:prelim}
\subsection{}
We recall some basic notation and results from algebraic groups,
see e.g. \cite{Springer:AlgGroupBook}. 
Let $G$ be a simply connected  semisimple linear algebraic group over 
$\C$ split over $\R$. We identify $G$ and any related spaces with their
$\R$-valued points in  their real topology (as real manifolds or subsets thereof).   
We write $\R^*$ for $\R\setminus\{0\}$. 

Let $T$ be a split torus and $B^+$ and $B^-$ opposite Borel 
subgroups containing $T$. We denote the character and the cocharacter
groups of $T$ by $X^*(T)$ and $X_*(T)$, respectively. Let $<\ ,\ >$
denote the dual pairing between $X^*(T)$ and $X_*(T)$. The unipotent 
radicals of $B^+$ and 
$B^-$ are denoted $U^+$ and $U^-$, respectively. 
Let $\{\alpha_i\ | \ i\in I\}\subset X^*(T)$ be the set of simple roots
associated to $B^+$ and $\{\alpha_i^\vee\ |\ i\in I\}\subset X_*(T)$ the 
corresponding coroots. Then we have the simple root subgroups 
$U^+_{\alpha_i}\subseteq U^+$ and $U^-_{\alpha_i}\subseteq U^-$.
Furthermore assume we are given homomorphisms
\begin{equation*}
\phi_i: SL_2(\mathbb \R)\to G, \qquad i\in I,
\end{equation*}  
such that 
\begin{equation*}
\phi_i\left(\begin{pmatrix}t &0\\ 0& t\inv 
\end{pmatrix}\right)=\alpha_i^\vee(t), \qquad t\in \R^*,
\end{equation*}
and such that 
\begin{equation*}
\phi_i\left(\begin{pmatrix}1 & m\\ 0&1\end{pmatrix}\right):=x_i(m), 
\quad 
\phi_i\left(\begin{pmatrix}1 & 0\\ m&1\end{pmatrix}\right):=y_i(m), 
\end{equation*}
define isomorphisms $x_i:\R\to U^+_{\alpha_i}$ and $y_i:
\R\to U^-_{\alpha_i}$. Following \cite{Lus:TotPos94}, the datum
$(T,B^+,B^-, x_i,y_i, i\in I)$ is called a {\it pinning} for $G$. 

\subsection{}\label{s:automorphism} If $G$ is not simply laced, then one can
construct a simply laced group $\dot G$ and an automorphism
$\tau$ of $\dot G$ defined over $\R$, such that there is
an isomorphism, also defined over $\R$, between $G$ and
 the fixed point subset $\dot G^\tau$ of $\dot G$. Moreover
 the groups $G$ and $\dot G$ have compatible pinnings.  Explicitly
 we have the following.

Let $\dot G$ be simply connected and simply laced.
We apply the same notations as above for $G$, but with an added  dot,
to our simply laced group $\dot G$. So we have a pinning 
$(\dot T,\dot B^+,\dot B^-, \dot x_i,\dot y_i, i\in \dot I)$ of $\dot G$,
and $\dot I$ may be identified with the vertex set of the Dynkin
diagram of $\dot G$. 

Let $\sigma$ be a permutation of $\dot I$ preserving
connected components of the Dynkin diagram, such that,
if $j$ and $j'$ lie in  the same orbit under $\sigma$
then they are {\it not} connected by an edge.   
Then $\sigma$ determines an automorphism $\tau$ of $\dot G$
such that 
\begin{enumerate}
\item
$\tau(\dot T)=\dot T$,
\item
$\tau(x_i(m))=x_{\sigma(i)}(m)$ and $\tau(y_i(m))=y_{\sigma(i)}(m)$ for all 
$i\in \dot I$ and $m\in \R$.
\end{enumerate}
In particular $\tau$ also preserves $\dot B^+,\dot B^-$. Let $\bar I$ denote 
the set of $\sigma$-orbits in $\dot I$, and for $\bar i\in \bar I$, let 
\begin{eqnarray*}
x_{\bar i}(m)&:=&\prod_{i\in \bar i}\ x_i(m),\\
y_{\bar i}(m)&:=&\prod_{i\in \bar i}\ y_i(m). 
\end{eqnarray*}
The fixed
point group $\dot G^{\tau}$ is a simply connected algebraic group with  pinning
$({\dot T}^\tau,\dot B^{+ \tau},\dot B^{-\, \tau},  x_{\bar i}, y_{\bar i}, \bar i\in \bar I)$.
There exists, and we choose, $\dot G$ and $\tau$ such that  $\dot G^{\tau}$
is isomorphic to our group $G$ via an isomorphism compatible with the pinnings.

\subsection{}
Let $W=N_G(T)/T$ be the Weyl group of $G$. For $i\in I$ the elements
\begin{equation*}
\dot s_i=x_i(-1)y_i(1)x_i(-1)
\end{equation*} 
represent the simple reflections $s_i\in W$. If $w=s_{i_1}\dotsc s_{i_m}$ 
is a reduced expression for $w$ then we write $\ell(w)=m$ for the length
of $w$. We note also that the representative
\begin{equation*}
\dot w=\dot s_{i_1}\dotsc \dot s_{i_m}
\end{equation*}   
of $w$ is well-defined, independent of the reduced expression. 
Inside $W$ there is a longest element which is denoted by $w_0$.

\subsection{}
Let $J$ be a subset of $I$. The parabolic subgroup 
$W_J\subseteq W$ is the subgroup generated by all of the
$s_j$ with $j\in J$. Let $w_J$ denote the longest element in 
$W_J$.  We also consider the set $W^J$
of minimal-length coset representatives for $W/W_J$, and the
set $W^J_{max}=W^Jw_J$ of maximal-length coset representatives.

The parabolic subgroup $W_J$ of $W$
corresponds to a parabolic subgroup $P_J$ 
in $G$ containing $B^+$. Namely, 
$P_J$ is the subgroup of $G$ generated by $B^+$ and the
elements $\dot w$ for $w\in W_J$.
Let  $\mathcal P^J$ be the set of parabolic subgroups $P$  
conjugate to $P_J$. This is a homogeneous space for the conjugation 
action of $G$ and can be identified with the 
partial flag variety $G/P_J$ via
 $$
G/P_J\overset\sim\To \mathcal P^J\ :\ gP_J\mapsto gP_J g\inv.
 $$
In the case $J=\emptyset$ we are identifying the full flag variety $G/B^+$
with the variety $\mathcal B$ of Borel subgroups in $G$. We have the
usual projection from the full flag variety to any 
partial flag variety which takes the form 
$\pi=\pi^J:\mathcal B\to\mathcal P^{J}$, where $\pi(B)$ is the 
unique parabolic subgroup of type $J$ containing $B$. 

The conjugate of a parabolic subgroup $P$ by 
an element $g\in G$ will be denoted by
$g\cdot P:=gPg\inv$.  

\subsection{}
Recall the Bruhat decomposition for the full flag variety,
$$
\mathcal B=\bigsqcup_{w\in W} B^+\dot w\cdot B^+,
$$
and the Bruhat order $\le$ on $W$. 
The Bruhat cell $B^+\dot w\cdot B^+$ is isomorphic
to $\mathbb \R^{\ell(w)}$. And the Bruhat order has the property
$$
v\le w\ \  \iff \ \  
B^+\dot v\cdot B^+\subseteq\overline {B^+\dot w\cdot B^+},
$$
for $v,w\in W$.

It is a well-known consequence of Bruhat decomposition 
that $\mathcal B\x\mathcal B$ is the union of the 
$G$-orbits $\mathcal O(w)=G\cdot (B^+,\dot w\cdot B^+)$,
with $G$ acting
diagonally. Therefore to any pair $(B_1,B_2)$ of Borel 
subgroups one can associate a unique $w\in W$ such that
 \begin{equation*}
(B_1,B_2)=(g\cdot B^+,g\dot w \cdot B^+)
\end{equation*} 
for some $g\in G$. We write  
$$
B_1\overset w\to B_2
$$ 
in this case and call $w$ the relative position of $B_1$ and $B_2$.

\subsection{}
Finally, let us consider the two opposite Bruhat 
decompositions 
\begin{equation*}
\mathcal B=\bigsqcup_{w\in W} B^+\dot w\cdot B^+=\bigsqcup_{v\in W}
B^-\dot v\cdot B^+.
\end{equation*}
Note that $B^-\dot v\cdot B^+\cong\mathbb R^{\ell(w_0)-\ell(v)}$. The 
closure relations for these opposite Bruhat cells are 
given by $B^-\dot v'\cdot B^+\subset 
\overline{B^-\dot v\cdot B^+}$ if and only if $v\le v'$. 
We define
\begin{equation*}
\mathcal R_{v,w}:=B^+\dot w\cdot B^+\cap B^-\dot v\cdot B^+,
\end{equation*}
the intersection of opposed Bruhat cells. This intersection is empty
unless $v\le w$, in which case it is smooth of dimension 
$\ell(w)-\ell(v)$, see \cite{KaLus:Hecke2,Lus:PartFlag}.

\section{Total Positivity for $G$ and $\mathcal B$}\label{s:TotPosGB}

Real projective space $\PP^n$ has a natural open subset: the set of 
lines spanned by vectors with all coordinates positive.  This subset is 
called the totally positive part of $\PP^n$, and its closure, the set
of lines spanned by vectors with all coordinates non-negative, is called
the totally non-negative part of $\PP^n$.  These subsets can be defined
more generally \cite{Lus:TotPos94}
for any split semisimple real algebraic group and any 
partial flag manifold  of such a group.

\subsection{Total positivity in $G$}\label{s:totpos}The totally nonnegative part $G_{\ge 0}$ of $G$ is defined
by Lusztig \cite{Lus:TotPos94} to be the semigroup
inside $G$ generated by the sets
\begin{align*}
&\{x_i(t)\ |\ t\in \R_{>0}, i\in I\},\\
&\{y_i(t)\ |\ t\in \R_{>0}, i\in I\}, \text{ and}\\
&T_{>0}:=\{ t\in T\ |\ \text{$\chi(t)>0$ all $\chi\in X^*(T)$}\}.
\end{align*}
When $G=SL_n(\R)$ then by a theorem of A.~Whitney 
this definition agrees with the classical notion 
of totally nonnegative matrices inside $SL_n(\R)$, 
that is those matrices all of whose minors are 
nonnegative.

We recall some basic facts about 
total positivity for $G$ from \cite{Lus:TotPos94}.  
Let 
$U^+_{\ge 0}:=G_{\ge 0}\cap U^+$  and $U^-_{\ge 0}:=G_{\ge 0}\cap U^-$.
For $w\in W$ and $s_{i_1}\dotsc s_{i_m}=w$ a reduced expression define
\begin{eqnarray*}
U^+(w)&:=&\{x_{i_1}(t_1)x_{i_2}(t_2)\dotsc x_{i_m}(t_m)\ |\ t_i\in\R_{>0}\},\\
U^-(w)&:=&\{y_{i_1}(t_1)y_{i_2}(t_2)\dotsc y_{i_m}(t_m)\ |\ t_i\in\R_{>0}\}.
\end{eqnarray*} 
These sets are independent of the chosen 
reduced expression and give
\begin{eqnarray*}
U^+(w)&=&U^+_{\ge 0}\cap B^-\dot w B^-,\\
U^-(w)&=& U^-_{\ge 0}\cap B^+\dot w B^+.
\end{eqnarray*}
In particular $U^+_{\ge 0}=\bigsqcup_{w\in W} U^+(w)$ and $U^-_{\ge 0}=
\bigsqcup_{w\in W}U^-(w)$. Moreover $U^+(w)$ and $U^-(w)$ are isomorphic to 
$\R_{>0}^{\ell(w)}$ using the $t_i$ as coordinates. 
The totally positive parts for $U^+$ and $U^-$ 
are defined by 
\begin{equation*}
U^+_{>0}:=U^+(w_0),\qquad U^-_{>0}:=U^-(w_0).
\end{equation*}

\subsection{Total positivity in $\mathcal B$}\label{s:totposflag}
The totally positive and totally nonnegative parts of the 
flag variety $\mathcal B$ are defined by 
\begin{eqnarray*}
\mathcal B_{>0}&:=&\{y\cdot B^+\ |\ y\in U^-_{>0}\},\\
\mathcal B_{\ge 0}&:=&\overline{\mathcal B_{>0}}.
\end{eqnarray*}

The set $\mathcal B_{\ge 0}$ has a 
decomposition into strata,  
\begin{equation*}
\mathcal R^{>0}_{v,w}:=\mathcal R_{v,w}\cap \mathcal B_{\ge 0},
\end{equation*}
where $v\le w$. 
These strata were defined and conjectured
to be semi-algebraic cells by Lusztig \cite{Lus:TotPos94}, 
a result which was later  proved  in \cite{Rie:CelDec}.
The conjecture was proved again in a different way in \cite{MarRie:ansatz},
this  time with explicit parametrizations of the cells
given. We recall these parametrizations 
now. 

Let $v\le w$ and let $\mathbf w=(i_1,\dotsc, i_m)$ encode a
reduced expression $s_{i_1}\dotsc s_{i_m}$ for $w$. Then there
exists a unique subexpression $s_{i_{j_1}}\dotsc s_{i_{j_k}}$ 
for $v$ in $\mathbf w$ with the 
property that, for $l=1,\dotsc,k$,
\begin{equation*}
s_{i_{j_1}}\dotsc s_{i_{j_{l}}}s_{i_r}>
s_{i_{j_1}}\dotsc s_{i_{j_{l}}}  \quad \text{whenever $j_l< r\le j_{l+1}$,}
\end{equation*}
where $j_{k+1}:=m$. This is loosely speaking the rightmost reduced subexpression for $v$ in 
$\mathbf w$. It is called the  `positive subexpression' in \cite{MarRie:ansatz}, and 
we use the notation
\begin{eqnarray*}
\mathbf v_+&:=&\{j_1,\dotsc, j_k\},\\
\mathbf v_+^c&:=&\{1,\dotsc, m\}\setminus \{j_1,j_2,\dotsc, j_k\},
\end{eqnarray*}
when referring to this special subexpression for $v$ in $\mathbf w$.

Now we can define a map 
\begin{eqnarray*}
\phi_{\mathbf v_+,\mathbf w}:(\C^*)^{\mathbf v^c_+} &\to& \mathcal R_{v,w},\\
        ( t_r)_{r\in\mathbf v^c_+}&\mapsto &g_1\dotsc g_m\cdot B^+,
\end{eqnarray*}
where
\begin{equation*}
g_r=\begin{cases}
\dot s_{i_r}, &\text{ if $r\in\mathbf v_+$,}\\
y_{i_r}(t_r) & \text{ if $r\in \mathbf v^c_+$.}
\end{cases}  
\end{equation*}
It is shown in \cite{MarRie:ansatz} that the map $\phi_{\mathbf v_+,\mathbf w}$ 
is an embedding with image
the open `Deodhar stratum' of $\mathcal R_{v,w}$ associated to $\mathbf w$,
see \cite{Deo:Decomp}. 

\begin{thm}\cite[Theorem~11.3]{MarRie:ansatz}\label{t:param} 
The restriction of $\phi_{\mathbf v_+,\mathbf w}$ to $(\R_{>0})^{\mathbf v_+^c}$ 
defines an isomorphism of semi-algebraic sets,
$$
\phi^{>0}_{\mathbf v_+,\mathbf w}: (\R_{>0})^{\mathbf v_+^c} \to \mathcal R_{v,w}^{>0}.
$$
\end{thm}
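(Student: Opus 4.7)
\emph{Proof proposal.} The map $\phi_{\mathbf v_+,\mathbf w}: (\C^*)^{\mathbf v_+^c}\to\mathcal R_{v,w}$ is already known to be an algebraic embedding onto the open Deodhar stratum of $\mathcal R_{v,w}$, so its restriction $\phi^{>0}_{\mathbf v_+,\mathbf w}$ to positive real parameters is automatically an injective continuous semi-algebraic map into $\mathcal R_{v,w}$. My plan is to complete the proof in three steps: (i) show that the image lies in $\mathcal B_{\ge 0}$ and hence in $\mathcal R_{v,w}^{>0}$; (ii) show that every point of $\mathcal R_{v,w}^{>0}$ arises as such an image; (iii) conclude that $\phi^{>0}_{\mathbf v_+,\mathbf w}$ is a semi-algebraic homeomorphism onto its image.

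For (i), since $\mathcal B_{\ge 0}$ is the closure of $\mathcal B_{>0} = U^-_{>0}\cdot B^+$, it suffices to exhibit each element $\phi^{>0}_{\mathbf v_+,\mathbf w}(t)$ as a limit of points in $U^-_{>0}\cdot B^+$. The essential local input is the $SL_2$-identity
\begin{equation*}
y(T)\;=\;x(1/T)\,\alpha^\vee(1/T)\,\dot s\,x(1/T),\qquad T>0,
\end{equation*}
which implies that $\dot s\cdot B^+ = \lim_{T\to\infty}y(T)\cdot B^+$. Lifting this observation to $G$, I would replace each factor $\dot s_{i_r}$ with $r\in\mathbf v_+$ in the product $g_1g_2\cdots g_m$ by $y_{i_r}(T_r)$ as $T_r\to\infty$, rescaling the fixed positive parameters $t_{r'}$ for $r'\in\mathbf v_+^c$ so as to absorb the corrections produced by the torus conjugations $\alpha_{i_r}^\vee(1/T_r)$ and the Chevalley commutations with the $x_{i_r}(1/T_r)$ factors. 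Each rescaled product then lies in $U^-_{>0}\cdot B^+$, and passing to the limit places $\phi^{>0}_{\mathbf v_+,\mathbf w}(t)$ in $\mathcal B_{\ge 0}$.

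For (ii) and (iii), the strategy is dimension together with invariance of domain. By \cite{Lus:TotPos94, Rie:CelDec}, $\mathcal R_{v,w}^{>0}$ is a semi-algebraic cell of dimension $\ell(w)-\ell(v) = |\mathbf v_+^c|$, so the continuous injection $\phi^{>0}_{\mathbf v_+,\mathbf w}$ between connected spaces of equal dimension has open image. For closedness of the image inside $\mathcal R_{v,w}^{>0}$, I would analyze one-parameter limits where some $t_r\to 0$ or $t_r\to\infty$: by induction on $|\mathbf v_+^c|$ together with the closure relations of \cite{Rie:TotPosPoset}, such boundary points lie in strictly smaller cells $\mathcal R_{v',w'}^{>0}$ with $(v',w')\neq (v,w)$ and therefore cannot recur inside $\mathcal R_{v,w}^{>0}$. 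Being nonempty, open and closed in the connected set $\mathcal R_{v,w}^{>0}$, the image equals it. Finally, since the algebraic extension $\phi_{\mathbf v_+,\mathbf w}$ is an isomorphism onto the Deodhar stratum, the inverse of $\phi^{>0}_{\mathbf v_+,\mathbf w}$ is continuous and semi-algebraic on the image.

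The principal obstacle I anticipate is step (i): the rescalings of the positive parameters $t_{r'}$ must be chosen coherently as multiple $T_r$'s tend to infinity, so that the resulting limit in $\mathcal B$ is precisely the coset $g_1g_2\cdots g_m\cdot B^+$ rather than some other point in $\overline{\mathcal B_{>0}}$. The combinatorics of these rescalings is dictated by the Cartan integers and by the relative positions of the indices in $\mathbf v_+$ versus $\mathbf v_+^c$, and it requires careful inductive bookkeeping to verify that the limit lands on the intended coset while maintaining positivity at every approximating stage.
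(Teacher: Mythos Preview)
The paper does not prove this statement: Theorem~\ref{t:param} is quoted verbatim from \cite[Theorem~11.3]{MarRie:ansatz} and used as a black box. So there is no ``paper's own proof'' to compare against here; one can only compare your outline to the argument in \cite{MarRie:ansatz}, which proceeds quite differently. There, Marsh and Rietsch construct an explicit rational inverse to $\phi_{\mathbf v_+,\mathbf w}$ on the Deodhar stratum via generalized chamber minors, and then verify directly that this inverse sends $\mathcal R_{v,w}^{>0}$ into $(\R_{>0})^{\mathbf v_+^c}$. That approach sidesteps both the limit analysis in your step (i) and the topological closedness argument in your step (ii).

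Your outline is structurally reasonable, but it has a circularity problem in step (ii). To argue that the image is closed in $\mathcal R_{v,w}^{>0}$ you invoke the closure relations of \cite{Rie:TotPosPoset}. Those closure relations are proved \emph{using} the parametrizations of \cite{MarRie:ansatz}, i.e.\ using the very theorem you are trying to establish; chronologically and logically, Theorem~\ref{t:param} is an input to \cite{Rie:TotPosPoset}, not a consequence of it. You would need an independent reason why a sequence $\phi^{>0}_{\mathbf v_+,\mathbf w}(t^{(n)})$ with $t^{(n)}$ leaving every compact subset of $(\R_{>0})^{\mathbf v_+^c}$ cannot accumulate inside $\mathcal R_{v,w}^{>0}$. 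Regarding step (i), the difficulty you flag is real: replacing each $\dot s_{i_r}$ by $y_{i_r}(T_r)$ does give a point of $U^-(w)\cdot B^+\subset\mathcal B_{\ge 0}$, but the torus factors $\alpha_{i_r}^\vee(1/T_r)$ rescale the remaining $t_{r'}$ by powers of the $T_r$, and one must check that an appropriate diagonal choice of the $t_{r'}$ (depending on $T$) makes the limit land on the prescribed coset. This can be carried out, but it is not a one-line remark, and as written your proposal leaves it as an unfulfilled promise.
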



\subsection{Changes of coordinates under braid relations} \label{s:BraidRels} 

\bigskip

In the simply laced case 
there is a simple change of coordinates \cite{Lus:TotPos94, Rie:MSgen}
which describes how 
two parameterizations of the same cell are related when considering
two reduced expressions which differ by a commuting relation or 
a braid relation.

If $s_i s_j = s_j s_i$ then $y_i(a) y_j(b)=y_j(b) y_i(a)$ 
   and $y_i(a) \dot s_j = \dot s_j y_i(a)$.

If $s_i s_j s_i = s_j s_i s_j$ then
\begin{enumerate}
\item 
$y_i(a) y_j(b) y_i(c) = y_j(\frac{bc}{a+c}) y_i(a+c) y_j(\frac{ab}{a+c})$.\\
\item $y_i(a) \dot s_j y_i(b) = y_j(\frac{b}{a}) y_i(a) \dot s_j$\\
\item 
    $\dot s_j \dot s_i y_j(a)=
    y_i(a) \dot s_j \dot s_i$.

\end{enumerate}

The changes of coordinates have also 
been computed for more general braid relations
and have been observed to be subtraction-free \cite{BZ,Rie:MSgen}.



\subsection{Closure relations}
We have the following closure relations.

\begin{thm}{\cite{Rie:TotPosPoset}}\label{t:B} Let $v,w\in W$. Then 
$$
\overline{\mathcal R_{v,w}^{>0}}=\bigsqcup_{ 
v\le v'\le w'\le w} 
\mathcal R^{>0}_{v',w'}.
$$ 
\end{thm}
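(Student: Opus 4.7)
My plan is to prove the two inclusions separately. For $\overline{\mathcal R^{>0}_{v,w}}\subseteq \bigsqcup_{v\le v'\le w'\le w} \mathcal R^{>0}_{v',w'}$ I would combine two closedness facts. First, $\mathcal B_{\ge 0}=\overline{\mathcal B_{>0}}$ is closed in $\mathcal B$, so $\overline{\mathcal R^{>0}_{v,w}}\subseteq \mathcal B_{\ge 0}$. Second, the standard Bruhat closure relations $\overline{B^+\dot w\cdot B^+}=\bigsqcup_{w'\le w}B^+\dot{w'}\cdot B^+$ and $\overline{B^-\dot v\cdot B^+}=\bigsqcup_{v'\ge v}B^-\dot{v'}\cdot B^+$ recalled in Section~\ref{s:prelim} give $\overline{\mathcal R_{v,w}}\subseteq \bigsqcup_{v\le v',\, w'\le w} \mathcal R_{v',w'}$, with the empty pieces being precisely those where $v'\not\le w'$. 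Intersecting with $\mathcal B_{\ge 0}$ yields the desired inclusion.

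For the reverse inclusion I would work with the parameterization $\phi^{>0}_{\mathbf v_+,\mathbf w}$ of Theorem~\ref{t:param}. Since each $y_i$ is continuous with $y_i(0)=e$, the map $\phi_{\mathbf v_+,\mathbf w}$ extends continuously to a map $(\R_{\ge 0})^{\mathbf v_+^c}\to \mathcal B$, and by continuity the image of the extension lies in $\overline{\mathcal R^{>0}_{v,w}}$. For each subset $S\subseteq \mathbf v_+^c$, I would show that the image of the relatively open face where $t_r=0$ exactly for $r\in S$ is contained in a single cell $\mathcal R^{>0}_{v_S,w_S}$ with $v\le v_S\le w_S\le w$. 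Setting those $t_r$ to zero deletes the corresponding $y_{i_r}$ factors from the product $g_1\cdots g_m$, and by an inductive rewriting using the commutation and braid identities of Section~\ref{s:BraidRels} (and the folding of Section~\ref{s:automorphism} in the non-simply-laced case) the remaining product can be brought into the canonical form of some $\phi^{>0}_{\mathbf v'_+,\mathbf w'}$, from which the pair $(v_S,w_S)$ can be read off together with the required inequalities.

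To finish the $\supseteq$ inclusion I would show that every pair $(v',w')$ with $v\le v'\le w'\le w$ arises as some $(v_S,w_S)$ for a suitable choice of reduced expression $\mathbf w$ and subset $S$. By transitivity of closure it suffices to reach each cover in the poset, namely either $(v,w')$ with $w'$ covered by $w$ and $v\le w'$, or $(v',w)$ with $v'$ covering $v$ and $v'\le w$. For each such cover the Bruhat subword property furnishes a reduced expression for $w$ with one distinguished letter corresponding to the cover step, and deleting the associated parameter $t_r$ lands the image in that adjacent cell; iterating gives the full union.

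The main obstacle lies in the identification step: after deleting factors from $g_1\cdots g_m$, the result is generally not in the canonical form required by a parameterization, and deleting a $y_{i_r}$ adjacent to one of the distinguished $\dot s_{i_k}$ factors can shift where the partition $\mathbf v_+\sqcup \mathbf v_+^c$ falls in the rewritten word. Tracking how the positive subexpression for $v$ evolves into that for $v_S$ under the braid rewrites, and checking that the inequalities $v\le v_S\le w_S\le w$ are preserved throughout the rewriting, is the heart of the argument.
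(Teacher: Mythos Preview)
The paper does not prove this theorem at all; it is quoted verbatim from \cite{Rie:TotPosPoset} and used as a black box. So there is no ``paper's own proof'' to compare against.

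That said, your proposal has a genuine gap in the $\supseteq$ direction. You extend $\phi^{>0}_{\mathbf v_+,\mathbf w}$ only to $(\R_{\ge 0})^{\mathbf v_+^c}$, i.e.\ you only take limits $t_r\to 0$. But such limits cannot raise $v$. Concretely, take $v=e$: then $\mathbf v_+=\emptyset$, every factor is some $y_{i_r}(t_r)$, and setting any subset of the $t_r$ to zero still yields an element of $U^-\cdot B^+\subset B^-\cdot B^+$, hence a point with $v'=e$. No choice of reduced expression $\mathbf w$ changes this. So your scheme never produces a point of $\mathcal R^{>0}_{v',w}$ with $v\lessdot v'$, and the transitivity argument you outline for the covers of type $(v',w)$ simply does not go through: ``deleting the associated parameter $t_r$'' removes a $y$-factor, whereas passing from $v$ to $v'$ requires producing an extra $\dot s$-factor. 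Those are different operations.

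What is actually needed is to allow degenerations where some $t_r\to\infty$ as well (so that, e.g., $y_{i_r}(t_r)\cdot B^+\to \dot s_{i_r}\cdot B^+$), and more generally mixed limits where several coordinates go to $0$ or $\infty$ at comparable rates. Controlling those limits is exactly why one passes to a toric compactification (as the present paper does for the CW result), or argues as in \cite{Rie:TotPosPoset} via the action of $y_i(t)$ and $x_i(t)$ on $\mathcal B_{\ge 0}$ and monotonicity of relative position. Your $\subseteq$ direction is fine, but the $\supseteq$ direction needs a genuinely different mechanism than the one you describe.
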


\subsection{Total positivity and canonical bases, for simply laced $G$}\label{s:CanonBasis}


%

\bigskip

We now assume that $G$ is simply laced.  Let $\bf U$ be the enveloping
algebra of the Lie algebra of $G$; this can be defined by generators
$e_i, h_i, f_i$ ($i\in I$) and the Serre relations.  For any 
dominant character
$\lambda$ 
there is a finite-dimensional simple $\bf U$-module $V(\lambda)$
with a non-zero vector $\eta$ such that $e_i \eta = 0$
and $h_i \eta = <\lambda,\alpha_i^\vee> \eta$ for all $i \in I$.  Moreover, the 
pair $(V(\lambda), \eta)$ is determined up to unique isomorphism.

There is a unique $G$-module structure on $V(\lambda)$ such that
for any $i\in I, a\in \R$ we have that $x_i(a)$ acts by 
\begin{equation*}
 \exp(a e_i):V(\lambda) \to V(\lambda), 
\end{equation*}
and $y_i(a)$ acts by
\begin{equation*}
\exp(a f_i): V(\lambda) \to V(\lambda).
\end{equation*}
Then 
$x_i(a)\eta = \eta$ for all $i\in I$, $a\in \R$, and 
$t \eta = \lambda(t) \eta$ for all $t \in T$.  Let $\B(\lambda)$
be the canonical basis of $V(\lambda)$ that contains $\eta$.
See \cite{Lus:CanonBasis} for details on the canonical basis.
In relation to $G_{\geq 0}$, the basis 
$\B(\lambda)$ has the following positivity property 
\cite[Prop. 3.2]{Lus:TotPos94}.

\begin{thm} \cite{Lus:TotPos94} 
Let $g\in G_{\geq 0}$.  Then the 
matrix entries of $g: V \to V$ with respect to $\B(\lambda)$ are non-negative
real numbers.
\end{thm}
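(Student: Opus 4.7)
The plan is to reduce to the generators of the semigroup $G_{\ge 0}$ and then to exploit the well-known positivity property of Lusztig's canonical basis with respect to the Chevalley generators.

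First, since $G_{\ge 0}$ is by definition the multiplicative semigroup generated by the elements $x_i(t)$, $y_i(t)$ (for $t\in\R_{>0}$) and $T_{>0}$, and since the product of two matrices with non-negative entries (with respect to any fixed basis) again has non-negative entries, it suffices to verify the claim on each of these three kinds of generators.

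For $t\in T_{>0}$, every element $b\in\B(\lambda)$ is a weight vector with weight $\mu\in X^*(T)$, so $t\cdot b = \mu(t)\, b$ and $\mu(t)>0$ by definition of $T_{>0}$; thus $t$ acts diagonally with positive entries. For the unipotent generators $x_i(t)=\exp(t e_i)$ and $y_i(t)=\exp(t f_i)$, I would invoke the fundamental positivity property of the canonical basis: the Chevalley generators $e_i$ and $f_i$ act on $\B(\lambda)$ with \emph{non-negative integer} structure constants, i.e.\ for every $b\in\B(\lambda)$ one has $e_i\cdot b = \sum_{b'\in \B(\lambda)} c_{b,b'}^i\, b'$ and similarly for $f_i$, with $c_{b,b'}^i\in\Z_{\ge 0}$. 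This is one of the central theorems of \cite{Lus:CanonBasis}. Given this, the matrices of $t e_i$ and $t f_i$ in the basis $\B(\lambda)$ have non-negative entries whenever $t\ge 0$, and since the exponential series $\exp(X)=\sum_{n\ge 0} X^n/n!$ of a matrix with non-negative entries is a sum of matrices each of which is a non-negative scalar multiple of a product of such matrices, $\exp(te_i)$ and $\exp(tf_i)$ also have non-negative entries in $\B(\lambda)$.

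Combining the three cases, any product of generators of $G_{\ge 0}$ acts on $V(\lambda)$ by a matrix in $\B(\lambda)$ with non-negative entries, proving the theorem. The substantive step — and the main obstacle if one wished to give a self-contained proof — is the positivity of the action of $e_i$ and $f_i$ on the canonical basis; this lies at the heart of Lusztig's construction via the geometry of quiver varieties (or via the bar-involution and Kashiwara's crystal bases), and I would import it as a black box from \cite{Lus:CanonBasis} rather than reprove it here.
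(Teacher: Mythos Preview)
Your argument is correct and is essentially the proof Lusztig gives in \cite[Prop.~3.2]{Lus:TotPos94}: reduce to the semigroup generators, handle $T_{>0}$ via the fact that canonical basis elements are weight vectors, and handle $x_i(t)$, $y_i(t)$ by exponentiating the non-negativity of the structure constants of $e_i$, $f_i$ on $\B(\lambda)$. Note, however, that the present paper does not supply its own proof of this theorem; it is simply quoted from \cite{Lus:TotPos94} as an input to the argument, so there is no ``paper's proof'' to compare against beyond observing that your sketch matches Lusztig's original one.
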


\begin{rem}\label{rem:positive1}
We remark that it is easy to see that a little more is true.  
The 
matrix entries of any $x_i(a)$ or 
$y_i(a): V \to V$ with respect to $\B(\lambda)$ 
are given by positive polynomials in $a$.
\end{rem}

The following lemma comes from \cite{Lus:PartFlag}.  

\begin{lem}\label{lem:positive2} \cite[1.7(a)]{Lus:PartFlag}.
For any $w\in W$, the vector $\dot w \eta$ is the unique element of 
$\B(\lambda)$ which lies in the extremal weight space 
$V(\lambda)^{w(\lambda)}$.  In particular, $\dot w \eta \in \B(\lambda)$.
\end{lem}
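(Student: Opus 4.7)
The plan is to combine a weight-space uniqueness argument with an induction on $\ell(w)$ that realizes $\dot w\eta$ as a repeated divided-power application to $\eta$. First I would check that $\dot w\eta$ is a weight vector of weight $w(\lambda)$: since $\dot w\in N_G(T)$, for $t\in T$ we have $t\dot w=\dot w\cdot (w^{-1}(t))$ with $w^{-1}(t):=\dot w^{-1}t\dot w\in T$, so $t\cdot\dot w\eta=\lambda(w^{-1}(t))\,\dot w\eta=w(\lambda)(t)\,\dot w\eta$. As $w(\lambda)$ lies in the Weyl orbit of the highest weight $\lambda$, the extremal weight space $V(\lambda)^{w(\lambda)}$ is one-dimensional. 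Since $\B(\lambda)$ is compatible with the weight decomposition, it contains at most one element of $V(\lambda)^{w(\lambda)}$, and hence it suffices to show $\dot w\eta\in\B(\lambda)$ (in particular nonzero).

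I would prove this by induction on $\ell(w)$; the case $w=e$ is immediate. For the inductive step, write $w=s_iw'$ with $\ell(w)=\ell(w')+1$ and set $b':=\dot{w'}\eta\in\B(\lambda)$. An $\mathfrak{sl}_2$-triple calculation (using $\dot s_i=\exp(-e_i)\exp(f_i)\exp(-e_i)$) shows that for any weight vector $v$ of weight $\mu$ satisfying $e_iv=0$ and $a:=\langle\mu,\alpha_i^\vee\rangle\ge 0$, one has $\dot s_iv=f_i^{(a)}v$. Applying this to $v=b'$ with $a=\langle w'(\lambda),\alpha_i^\vee\rangle$, the non-negativity of $a$ follows from the standard equivalence $\ell(s_iw')>\ell(w')\iff (w')^{-1}(\alpha_i)>0$ together with the dominance of $\lambda$. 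The relation $e_ib'=0$ amounts to showing $w'(\lambda)+\alpha_i$ is not a weight of $V(\lambda)$; applying $(w')^{-1}$ and using $W$-invariance of the weight support reduces this to the claim that $\lambda+(w')^{-1}(\alpha_i)$ is not a weight, which is clear since $\lambda$ is the highest weight and $(w')^{-1}(\alpha_i)$ is a positive root. Thus $\dot w\eta=f_i^{(a)}b'$.

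The remaining step is the implication $b'\in\B(\lambda)\Rightarrow f_i^{(a)}b'\in\B(\lambda)$ under the hypotheses above. I would invoke the fundamental compatibility of the global canonical basis with divided powers from \cite{Lus:CanonBasis}: if $b\in\B(\lambda)$ satisfies $e_ib=0$ and $\langle\mathrm{wt}(b),\alpha_i^\vee\rangle=a$, then $f_i^{(a)}b$ is again an element of $\B(\lambda)$, namely the opposite end of the $\mathfrak{sl}_2$-string through $b$ in the crystal. This is the principal obstacle, as it is the only point where genuine canonical-basis machinery enters; everything else is weight bookkeeping together with a Weyl-group length argument. Once this compatibility is granted, the induction closes and yields $\dot w\eta\in\B(\lambda)$, which by the uniqueness from the first paragraph forces $\dot w\eta$ to be the sole canonical basis element in $V(\lambda)^{w(\lambda)}$.
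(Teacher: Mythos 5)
Your proof is correct, but note that the paper itself offers no argument for this lemma: it is quoted directly from \cite[1.7(a)]{Lus:PartFlag}, so what you have written is a reconstruction of the proof behind the citation, and it is essentially the standard (Lusztig) argument. Your ingredients all check out: $\dot w\eta$ has weight $w(\lambda)$, extremal weight spaces are one-dimensional and the canonical basis is a weight basis, so uniqueness reduces to membership; the identity $\dot s_i v=f_i^{(a)}v$ for a weight vector $v$ with $e_iv=0$ and $a=\langle\mathrm{wt}(v),\alpha_i^\vee\rangle\ge 0$ does hold with no sign for the representative $\dot s_i=x_i(-1)y_i(1)x_i(-1)$ used in the paper; $e_i\dot w'\eta=0$ and $a\ge 0$ follow from $\ell(s_iw')>\ell(w')\iff (w')^{-1}\alpha_i>0$ and dominance of $\lambda$, exactly as you say. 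The one substantive input you invoke --- that $e_ib=0$ and $a=\langle\mathrm{wt}(b),\alpha_i^\vee\rangle$ imply $f_i^{(a)}b\in\B(\lambda)$ --- is true in the form you use it (with honest vanishing $e_ib=0$ one has $f_i^{(a)}b=\tilde f_i^{\,a}b$ exactly, and bar-invariance plus the lattice and integral-form characterization identify it as a canonical basis element), but the correct reference is Lusztig's book \cite{Lus:Quantum}, \S 28.1 (whose corollary on extremal weight vectors is precisely the statement being cited from \cite{Lus:PartFlag}), rather than \cite{Lus:CanonBasis}, which constructs the basis of $U^-$. You could also close the induction more cheaply in this particular situation: since $V(\lambda)^{w(\lambda)}$ is one-dimensional, $f_i^{(a)}b'=c\,b''$ for the unique basis vector $b''$ there; the $\mathfrak{sl}_2$-identity $e_i^{(a)}f_i^{(a)}b'=b'$ together with $\dim V(\lambda)^{w'(\lambda)}=1$ gives that $c$ is invertible against the corresponding coefficient of $e_i^{(a)}b''$, and the integrality and positivity of the action of divided powers on $\B(\lambda)$ (compare Remark \ref{rem:positive1}) then force $c=1$, avoiding the full based-module string lemma.
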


\section{Positivity for toric varieties}\label{s:Toric}

In this section we define projective toric varieties and recall 
some basic results.

We may define a (generalized) projective toric variety 
as follows \cite{Cox, Sottile}.  
Let $S=\{\mathbf{m}_i \ \vert \ i=1, \dots, \ell\}$ be any finite subset 
of $\Z^n$, where $\Z^n$ can be thought of as the character group
of the torus $(\C^*)^n$. 
Here 
$\mathbf{m}_i=(m_{i1}, m_{i2},\dots ,m_{in})$.
Then consider
the map $\phi: (\C^*)^n \to \PP^{\ell-1}$ such that
$\mathbf{x}=(x_1, \dots , x_n) \mapsto [\mathbf{x^{m_1}}, \dots , \mathbf{x^{m_\ell}}]$.
Here $\mathbf{x^{m_i}}$ denotes $x_1^{m_{i1}} x_2^{m_{i2}} \dots x_n^{m_{in}}$.
We then define the toric variety $X_S$
to be the Zariski closure of the image of this map.
The {\it real part} $X_S(\R)$ of $X_S$ is defined to be the
intersection of $X_S$ with $\R\PP^{\ell-1}$; the
{\it positive part} $X_S^{>0}$ is defined to be the image of
$(\R_{>0})^n$ under $\phi$; and the {\it non-negative part}
$X_S^{\geq 0}$ is defined to be the closure (in $X_S(\R)$) of
$X_S^{>0}$.

Note that $X_S$ is not necessarily a toric variety in the sense of \cite{Fulton}, as 
it may not be normal;
however, its normalization is a toric variety in this sense.  See \cite{Cox} for more details.

Observe that if $S$ is the set of lattice points in the standard simplex, then 
 $X_S^{>0}$ and 
$X_S^{\geq 0}$ are the totally positive and totally non-negative 
parts of real projective space.

Let $P$ be the convex hull of $S$.  
The restriction of the moment map is a homeomorphism from $X_S^{\geq 0}$ to $P$
(see \cite[Section 4.2, page 81]{Fulton}
and \cite[Theorem 8.4]{Sottile}).
In particular, 
$X_S^{\geq 0}$ is homeomorphic to a closed ball.

The following lemma, proved in \cite{PSW}, will be an important tool in the proof of our
main result.

\begin{lem}\label{important} \cite{PSW}
Suppose we have a map $\Phi: (\R_{>0})^n \to \PP^{N-1}$ given by
\begin{equation*}
(t_1, \dots , t_n) \mapsto [h_1(t_1,\dots,t_n), \dots , h_N(t_1,\dots,t_n)],
\end{equation*}
where the $h_i$'s are Laurent polynomials with positive coefficients.  Let $S$ be the
set of all exponent vectors in $\Z^n$ which occur among the (Laurent) monomials
of the $h_i$'s, and let $P$ be the convex hull of the points of $S$.
Then the map $\Phi$ factors through the totally positive part
$X_S^{>0}$, giving a map 
$\tau_{>0}: X_S^{>0} \to \PP^{N-1}$.  Moreover $\tau_{>0}$ extends continuously to the 
closure to give a well-defined map  
$\tau_{\ge 0}:X_S^{\ge 0} \to \overline{\tau_{>0}(X_{S}^{>0})}$.
\end{lem}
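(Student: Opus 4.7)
The plan is to factor $\Phi$ through the toric embedding. Write each Laurent polynomial as $h_i(\mathbf{t}) = \sum_{j=1}^{\ell} c_{ij}\, \mathbf{t}^{\mathbf{m}_j}$ with $c_{ij} \ge 0$, where by construction of $S$ every column of the matrix $(c_{ij})$ has at least one strictly positive entry. Let $\psi: (\R_{>0})^n \to \PP^{\ell-1}$ be the map $\mathbf{t} \mapsto [\mathbf{t}^{\mathbf{m}_1}: \dots : \mathbf{t}^{\mathbf{m}_\ell}]$, whose image is $X_S^{>0}$. Let $\tilde L : \R^\ell \to \R^N$ be the linear map with matrix $(c_{ij})$, inducing a rational map $L: \PP^{\ell-1} \dashrightarrow \PP^{N-1}$. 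Then by construction $\Phi = L \circ \psi$, so the restriction of $L$ to $X_S^{>0}$ already produces the desired $\tau_{>0}$.

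Next I would extend $\tau_{>0}$ to $X_S^{\ge 0}$. The key observation is that every point of $X_S^{\ge 0}$ admits a \emph{nonnegative} representative in $\R^\ell \setminus \{0\}$: points of $X_S^{>0}$ have strictly positive representatives $(\mathbf{t}^{\mathbf{m}_1},\dots,\mathbf{t}^{\mathbf{m}_\ell})$, and after normalizing to the unit sphere any convergent sequence limits to a unit nonnegative vector, whose projective class represents the limit point. Given such a representative $v = (v_1,\dots,v_\ell)$ with $v \neq 0$, pick $j$ with $v_j > 0$; by construction of $S$ there exists $i$ with $c_{ij} > 0$, so $\tilde L(v)_i \ge c_{ij} v_j > 0$. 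Thus $\tilde L(v) \neq 0$, and $L$ is defined at $[v]$.

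This shows $L$ restricts to a well-defined continuous map $\tau_{\ge 0}: X_S^{\ge 0} \to \PP^{N-1}$, extending $\tau_{>0}$. For the image, I would invoke compactness: $X_S^{\ge 0}$ is closed in $\R\PP^{\ell-1}$, hence compact, so its image under $\tau_{\ge 0}$ is compact and in particular closed in $\PP^{N-1}$. Since this image contains $\tau_{>0}(X_S^{>0})$, it contains $\overline{\tau_{>0}(X_S^{>0})}$; the reverse inclusion follows because $X_S^{>0}$ is dense in $X_S^{\ge 0}$ and $\tau_{\ge 0}$ is continuous.

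The lemma is essentially a bookkeeping exercise rather than a deep statement, but the one place that requires genuine care is the verification that $\tilde L$ has no zero on any representative of $X_S^{\ge 0}$. This step relies on two inputs that must be used together: that points of $X_S^{\ge 0}$ possess nonnegative representatives (a consequence of the closure construction together with the $\pm$ identification in $\R\PP^{\ell-1}$), and that each exponent $\mathbf{m}_j \in S$ appears in \emph{some} $h_i$, which prevents a whole column of $(c_{ij})$ from being zero. Without either hypothesis the indeterminacy locus of $L$ could meet $X_S^{\ge 0}$ and the extension would fail.
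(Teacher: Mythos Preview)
Your proof is correct and follows essentially the same route as the paper's: factor $\Phi$ through the monomial map $\psi$, recognize the second factor as a linear projection $[x_1:\dots:x_\ell]\mapsto[g_1(x):\dots:g_N(x)]$ with nonnegative coefficients in which every coordinate $x_j$ actually appears, and then use that points of $X_S^{\ge 0}$ admit nonnegative representatives to show the projection is defined on all of $X_S^{\ge 0}$. Your additional compactness argument establishing that $\tau_{\ge 0}$ is \emph{onto} $\overline{\tau_{>0}(X_S^{>0})}$ is a nice bonus; the paper only asserts (and only needs) that the extension lands in this closure, which already follows from continuity and density.
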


\begin{proof}
Let $S = \{\mathbf{m_1},\dots,\mathbf{m_{\ell}}\}$.
Clearly the map $\Phi$ factors
as the composite map $t=(t_1,\dots,t_n) \mapsto
 [\mathbf{t^{m_1}}, \dots , \mathbf{t^{m_\ell}}] \mapsto [h_1(t_1,\dots,t_n),\dots,
       h_N(t_1,\dots,t_n)]$,
and the image of $(\R_{>0})^n$ under the first map is precisely 
$X_S^{>0}$.
The second map, which we will call $\tau_{>0}$, 
takes a point $[x_1,\dots, x_{\ell}]$ of $X_S^{>0}$ to
   $[g_1(x_1,\dots,x_{\ell}), \dots, g_N(x_1,\dots,x_{\ell})]$,
where the $g_i$'s are homogeneous polynomials of degree $1$ with positive coefficients.
By construction, each $x_i$ occurs in at least one of the $g_i$'s.

Since $X_S^{\geq 0}$ is the closure inside $X_S$ of $X_S^{>0}$,
any point $[x_1,\dots,x_{\ell}]$ of $X_S^{\geq 0}$ has all $x_i$'s non-negative;
furthermore, not all of the $x_i$'s are equal to $0$.  And now since the $g_i$'s
have positive coefficients and they involve {\it all} of the $x_i$'s, the image of
any point $[x_1,\dots,x_{\ell}]$ of $X_S^{\geq 0}$ under $\tau_{>0}$ is well-defined.
Therefore $\tau_{>0}$ extends continuously to the closure to give a well-defined map
$\tau_{\ge 0}:X_S^{\ge 0} \to \overline{\tau_{>0}(X_{S}^{>0})}$.

\end{proof}

\bigskip
\section{Construction of a toric variety associated to a parametrization of  a cell}\label{s:ConstructToric}

We begin by stating a key proposition. 

\begin{prop}\label{p:main} Given $G$ we can construct a positivity preserving embedding 
$i:G/B\to \mathbb P^N$, for some $N$ with the following property.
For any totally nonnegative cell $\mathcal R_{v,w}^{>0}$ and parameterization
$\phi^{>0}_{\mathbf v_+, \mathbf w}$ as in Theorem~\ref{t:param}, the composition 
$$
i\circ\phi^{>0}_{\mathbf v_+, \mathbf w}:(\R_{>0})^{\mathbf v^c_+}\overset{\sim}\To\mathcal R_{v,w}^{>0}\hookrightarrow
\mathbb P^N
$$
takes the form 
$$
i \circ\phi^{>0}_{\mathbf v_+, \mathbf w} : \mathbf t=  ( t_r)_{r\in\mathbf v^c_+}\mapsto [p_1(\mathbf t),\dotsc,p_{N+1}(\mathbf t)],
$$
where the $p_j$'s are polynomials with nonnegative coefficients. 
\end{prop}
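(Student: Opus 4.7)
The plan is to use Lusztig's canonical basis to define the embedding and then exploit the Marsh--Rietsch parameterization's positivity via an inductive factorization argument.

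\textbf{Reduction and setup.} First I would reduce to the simply laced case via Section~\ref{s:automorphism}: if $G = \dot G^{\tau}$, any $\tau$-equivariant embedding for $\dot G/\dot B^{+}$ with the desired positivity restricts to one for $G/B^{+}$, since the Marsh--Rietsch parameterization for $G$ arises by restriction from the corresponding one for $\dot G$. In the simply laced case, fix a regular dominant integral weight $\lambda = \sum_{i \in I} \omega_i$, and consider the irreducible representation $V(\lambda)$ with highest weight vector $\eta$ and Lusztig canonical basis $\B(\lambda) = \{b_1, \dots, b_{N+1}\}$. Because $\lambda$ is regular the stabilizer of $[\eta] \in \PP(V(\lambda))$ equals $B^{+}$, so $i \colon G/B^{+} \to \PP(V(\lambda)) \cong \PP^{N}$ defined by $gB^{+} \mapsto [g\eta]$ is a closed embedding. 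Taking $\B(\lambda)$ as homogeneous coordinates, the composition $i \circ \phi^{>0}_{\mathbf v_+, \mathbf w}(\mathbf t)$ records the coefficients of $g_1 \cdots g_m \eta$ in $\B(\lambda)$.

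\textbf{Main strategy.} To establish positivity, I would prove that in $G/B^{+}$ one has the factorization
$$
g_1 g_2 \cdots g_m \cdot B^{+} \;=\; Y \cdot \dot v \cdot B^{+},
$$
where $Y$ lies in $U^{-}_{\geq 0}$ and admits an explicit expression $y_{k_1}(s_1) y_{k_2}(s_2) \cdots y_{k_p}(s_p)$ in the simple-root Chevalley generators, with each $s_l$ a polynomial in $\mathbf t$ having nonnegative coefficients. Granting this, we have $g_1 \cdots g_m \eta = c \cdot Y \dot v \eta$ for some positive scalar $c$. Since $\dot v \eta \in \B(\lambda)$ by Lemma~\ref{lem:positive2}, and each factor $y_{k_l}(s_l)$ acts on $\B(\lambda)$ with positive-polynomial matrix entries in $s_l$ by Remark~\ref{rem:positive1}, the coordinates of $Y \dot v \eta$ in $\B(\lambda)$ are nonnegative polynomials in $s_1, \dots, s_p$, hence in $\mathbf t$.

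\textbf{Main obstacle and resolution.} The difficult step is producing the desired factorization with simple-root generators and subtraction-free parameters. A naive ``commute the $\dot s$'s past the $y$'s'' rewrite can introduce non-simple root subgroup elements \emph{and} signs, since one has $\dot s_i^{\,-1} y_j(t) \dot s_i = y_{s_i \alpha_j}(\pm t)$, and the resulting one-parameter subgroup need not lie in $U^{-}_{\geq 0}$. The resolution exploits the defining property of the positive subexpression $\mathbf v_+$: the partial products $p_l = s_{i_{j_1}} \cdots s_{i_{j_l}}$ form a length-additive chain in the Weyl group, and at every position $r \in \mathbf v_+^{c}$ with $j_l < r \le j_{l+1}$, the simple reflection $s_{i_r}$ satisfies $p_l^{-1}(\alpha_{i_r}) > 0$. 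I would proceed by induction on $|\mathbf v_+|$, peeling off the rightmost $\dot s$ in the product and applying the subtraction-free braid relation $\dot s_j \dot s_i y_j(a) = y_i(a) \dot s_j \dot s_i$ of Section~\ref{s:BraidRels} together with the commutation relations to migrate the intervening $y$'s across the $\dot s$'s. The positive subexpression property ensures that at every step only the polynomial (division-free) braid move is invoked, so all parameters in the resulting expression remain nonnegative polynomials in $\mathbf t$, yielding the factorization $Y \cdot \dot v$ required for Lusztig's positivity theorem to apply.
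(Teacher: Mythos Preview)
Your setup (the embedding via $V(\rho)$ and its canonical basis, the reduction to the simply laced case) matches the paper's. The gap is in your ``main obstacle and resolution'' step: the claimed polynomial factorization $g_1\cdots g_m\cdot B^+ = Y\dot v\cdot B^+$ with $Y$ a product of $y_{k_l}(s_l)$'s and each $s_l$ a \emph{polynomial} in $\mathbf t$ is not in general available, and the positive-subexpression property does not force it.

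Consider $G=SL_3$, $\mathbf w_0=(1,2,1)$, $v=s_2$. The positive subexpression is $\mathbf v_+=\{2\}$, so $g_1g_2g_3=y_1(t_1)\,\dot s_2\,y_1(t_3)$. To push $\dot s_2$ to the right you must move it past $y_1(t_3)$; neither commutation (the indices are adjacent) nor relation~(3) (there is only one $\dot s$) applies. The only available identity is relation~(2), which yields $y_1(t_1)\dot s_2 y_1(t_3)=y_2(t_3/t_1)\,y_1(t_1)\,\dot s_2$, introducing the rational parameter $t_3/t_1$. The positive-subexpression condition here says only that $v s_1>v$, which is true but does not furnish a division-free move. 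More generally, whenever a single $\dot s_{i_{j_k}}$ is followed by $y_{i_r}$ with $i_r$ adjacent to $i_{j_k}$, relation~(3) is unavailable and your scheme stalls.

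The paper sidesteps this by working globally rather than by local commutation. For $w=w_0$ it first takes a reduced expression of $w_0$ \emph{ending} in a reduced word for $v$; there the Marsh--Rietsch product is literally $y_{i_1}(a_1)\cdots y_{i_k}(a_k)\,\dot v$, so positivity of the projective coordinates is immediate from Remark~\ref{rem:positive1} and Lemma~\ref{lem:positive2}. For an arbitrary reduced expression of $w_0$ it then invokes the full set of subtraction-free braid substitutions from Section~\ref{s:BraidRels} (including the division in relations~(1) and~(2)) to relate the two parameterizations, obtaining subtraction-free \emph{rational} coordinates, and finally clears denominators in projective space to get honest polynomials. The case of general $w$ is handled separately by choosing $\mathbf w_0$ with initial segment $w_0w^{-1}$ and specializing the first $\ell(w_0w^{-1})$ parameters to~$0$. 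If you want to keep your factorization viewpoint, you would likewise have to allow rational $s_l$'s and then argue that denominators can be cleared after applying $Y$ to $\dot v\eta$.
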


This proposition will be proved in the next section. 
Now assuming Proposition \ref{p:main} is true, we can prove the following.

\begin{cor} \label{c:glue}
There is a map $\tau_{>0}:X_{\mathbf v_+,\mathbf w}^{>0}\to \PP^N$ which extends continuously to 
the closure to give a well-defined map
$$
\tau_{\ge 0}:X_{\mathbf v_+,\mathbf w}^{\ge 0}\to \overline{\tau_{>0}(X_{\mathbf v_+,\mathbf w}^{>0})}.
$$
Moreover we have
$$\overline{\tau_{>0}(X_{\mathbf v_+,\mathbf w}^{>0})}=i(\overline{\mathcal R_{v,w}^{>0}})
\overset\sim\To \overline{\mathcal R_{v,w}^{>0}}.
$$
The resulting map $X_{\mathbf v_+,\mathbf w}^{\ge 0}\to  \overline{\mathcal R_{v,w}^{>0}}$
is surjective, an isomorphism on the strictly positive parts, and takes the boundary of $X_{\mathbf v_+,\mathbf w}^{\ge 0}$ to the boundary of  $\overline{\mathcal R_{v,w}^{>0}}$. 
\end{cor}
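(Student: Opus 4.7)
The plan is to deduce the corollary from Proposition~\ref{p:main} and Lemma~\ref{important} by a short chain of topological arguments. The starting point is to apply Proposition~\ref{p:main} to obtain the embedding $i$ and the nonnegative-coefficient polynomials $p_1,\dots,p_{N+1}$ that express $i\circ\phi^{>0}_{\mathbf v_+,\mathbf w}$ in homogeneous coordinates, and then to feed these $p_j$ (as the $h_j$) into Lemma~\ref{important}. This directly yields the toric variety $X_{\mathbf v_+,\mathbf w}=X_S$, with $S\subset\Z^{\mathbf v_+^c}$ the set of exponent vectors appearing in the $p_j$, together with $\tau_{>0}$ and its continuous extension $\tau_{\ge 0}$ to $X^{\ge 0}_{\mathbf v_+,\mathbf w}$.

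Identifying the image goes as follows. By construction $\tau_{>0}$ factors $i\circ\phi^{>0}_{\mathbf v_+,\mathbf w}$ through $X^{>0}_{\mathbf v_+,\mathbf w}$, and by Theorem~\ref{t:param} the image of the latter map is exactly $i(\mathcal R_{v,w}^{>0})$. Since $G/B$ is projective, $i$ has closed image in $\PP^N$ and is a homeomorphism onto it, so the closure of $\tau_{>0}(X^{>0}_{\mathbf v_+,\mathbf w})=i(\mathcal R_{v,w}^{>0})$ is $i(\overline{\mathcal R_{v,w}^{>0}})\overset{\sim}\To\overline{\mathcal R_{v,w}^{>0}}$. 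Composing $\tau_{\ge 0}$ with $i\inv$ thus gives the claimed map $X^{\ge 0}_{\mathbf v_+,\mathbf w}\to\overline{\mathcal R_{v,w}^{>0}}$, and surjectivity is immediate: $X^{\ge 0}_{\mathbf v_+,\mathbf w}$ is closed in $\R\PP^{\ell-1}$, hence compact, so its continuous image is closed; since that image contains the dense subset $i(\mathcal R_{v,w}^{>0})$, it must equal $i(\overline{\mathcal R_{v,w}^{>0}})$.

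For the positive-parts statement, observe that the three-step composite $(\R_{>0})^{\mathbf v_+^c}\to X^{>0}_{\mathbf v_+,\mathbf w}\overset{\tau_{>0}}\To i(\mathcal R_{v,w}^{>0})$ is, by construction, the map $i\circ\phi^{>0}_{\mathbf v_+,\mathbf w}$, which is a homeomorphism by Theorem~\ref{t:param}. Both factors are continuous surjections, and a general point-set argument (if $g\circ f$ is a homeomorphism and both $f,g$ are continuous surjections, each is invertible with continuous inverse) shows that $\tau_{>0}|_{X^{>0}}$ is itself a homeomorphism onto $i(\mathcal R_{v,w}^{>0})$; composing with $i\inv$ gives the desired isomorphism $X^{>0}_{\mathbf v_+,\mathbf w}\overset{\sim}\To\mathcal R_{v,w}^{>0}$.

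The only step I expect to need any real care is the boundary-to-boundary assertion, which I would settle by contradiction using density of $X^{>0}$ in $X^{\ge 0}$. Suppose $x\in X^{\ge 0}_{\mathbf v_+,\mathbf w}\setminus X^{>0}_{\mathbf v_+,\mathbf w}$ satisfies $\tau_{\ge 0}(x)\in i(\mathcal R_{v,w}^{>0})$, and let $x_0\in X^{>0}_{\mathbf v_+,\mathbf w}$ be the unique preimage of $\tau_{\ge 0}(x)$ under the homeomorphism $\tau_{>0}|_{X^{>0}}$ from the previous step. Density provides a sequence $x_n\in X^{>0}$ with $x_n\to x$ in $X^{\ge 0}$; continuity of $\tau_{\ge 0}$ gives $\tau_{>0}(x_n)\to \tau_{\ge 0}(x)$ in $i(\mathcal R_{v,w}^{>0})$, and continuity of $(\tau_{>0}|_{X^{>0}})\inv$ at that point forces $x_n\to x_0$ in $X^{>0}$, hence in the ambient $X^{\ge 0}$. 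Hausdorffness then yields $x=x_0\in X^{>0}$, a contradiction. The substantive content of the corollary is entirely packed into Proposition~\ref{p:main}; granted that, everything else is a routine combination of compactness, continuity, and denseness.
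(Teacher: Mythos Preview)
Your proof is correct and follows essentially the same approach as the paper: apply Proposition~\ref{p:main} and Lemma~\ref{important} to construct $X_{\mathbf v_+,\mathbf w}$ and the maps $\tau_{>0}$, $\tau_{\ge 0}$, then verify the isomorphism on positive parts via the factorization of $i\circ\phi^{>0}_{\mathbf v_+,\mathbf w}$, and finally prove the boundary-to-boundary claim by a sequence/continuity-of-inverse contradiction. Your argument is in fact slightly more detailed than the paper's in two places---you spell out the compactness reason for surjectivity and the image identification $\overline{\tau_{>0}(X^{>0})}=i(\overline{\mathcal R_{v,w}^{>0}})$, and you make explicit the point-set lemma behind the positive-parts isomorphism---but the structure and ideas are the same.
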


\begin{proof}
Let
$S_{\mathbf v_+, \mathbf w}$ be the set of all exponent vectors in $\Z^{n+1}$
which occur among the monomials of the $p_i$'s, and let 
$P_{\mathbf v_+,\mathbf w}$ be the convex hull of the points of $S_{\mathbf v_+, \mathbf w}$.
Let 
$X_{\mathbf v_+,\mathbf w}$  be the toric variety associated with 
$P_{\mathbf v_+,\mathbf w}$.  By Lemma \ref{important}, 
the map 
$i \circ\phi^{>0}_{\mathbf v_+, \mathbf w}$ factors through 
$X_{\mathbf v_+,\mathbf w}^{> 0}$, 
\begin{equation*}
(\R_{>0})^{\mathbf v^c_+}\overset{\sim}\To X_{\mathbf v_+,\mathbf w}^{> 0}\To\mathcal R_{v,w}^{>0}\hookrightarrow
\mathbb P^N,
\end{equation*}
and  
we get a 
map 
$\tau_{>0}$ from
$X_{\mathbf v_+,\mathbf w}^{> 0}$ to $\PP^N$.  Moreover, this map extends 
continuously to a map $\tau_{\geq 0}$ from 
$X_{\mathbf v_+,\mathbf w}^{\geq 0}$ to  
$\overline{\tau_{>0}(X_{\mathbf v_+,\mathbf w}^{>0})}$.

Since the map 
${i \circ\phi^{>0}_{\mathbf v_+, \mathbf w}}$ is a homeomorphism onto its image
and it factors through 
$X_{\mathbf v_+,\mathbf w}^{> 0}$ as above,  
the map $\tau_{\geq 0}$ restricts to a homeomorphism from 
$X_{\mathbf v_+,\mathbf w}^{> 0}$ to 
$i({\mathcal R_{v,w}^{>0}})$.
We now claim that that $\tau_{\geq 0}$ 
takes the boundary of $X_{\mathbf v_+,\mathbf w}^{\ge 0}$ 
to the boundary of  $\overline{i(\mathcal R_{v,w}^{>0})}$. 

To prove the claim, suppose that there is a point $x \in \bd(X_{\mathbf v_+,\mathbf w}^{\ge 0})$ such that
$\tau_{\geq 0}(x)=y$ is in the interior of $\overline{i(\mathcal R_{v,w}^{>0})}$.  Since $x$ is in the boundary of 
$X_{\mathbf v_+,\mathbf w}^{\ge 0}$ we can find
a sequence of points $\{x_i\}$ in $X_{\mathbf v_+,\mathbf w}^{> 0}$ which converge to $x$.  Let 
$y_i=\tau_{\geq 0}(x_i)$.
Since $\tau_{\geq 0}$ is a homeomorphism on the interior, each $y_i$ is in $i({\mathcal R_{v,w}^{>0}})$, 
and since
$\tau_{\geq 0}$ is continuous, the sequence of points $y_i$ converges to $y$.  
Let $g:i({\mathcal R_{v,w}^{>0}}) \overset \sim\to X_{\mathbf v_+,\mathbf w}^{> 0}$ denote
the inverse of the restriction of
$\tau_{\geq 0}$ to the interiors.  We now have that 
$g$ maps the
convergent sequence $\{y_i\}$ in $i({\mathcal R_{v,w}^{>0}})$ to the
divergent sequence $\{x_i\}$ in $X_{\mathbf v_+,\mathbf w}^{> 0}$, which 
contradicts the continuity
of $g$.

\end{proof}

Since $X_{\mathbf v_+,\mathbf w}^{\ge 0}$ is homeomorphic to a closed ball  the above corollary
provides us with a glueing map for $\mathcal R_{v,w}^{>0}$.

\bigskip

\section{proof of Proposition \ref{p:main}}\label{s:proof}

In this section we will prove Proposition \ref{p:main}.

\subsection{Simply laced case}\label{s:simplylaced}
We suppose $G$ is simply laced. Consider the 
representation 
$V=V(\rho)$ of $G$ with
a fixed highest weight vector $\eta$ and 
corresponding canonical basis $\mathcal B(\rho)$. 
Let $i:\mathcal B\to \mathbb P(V)$ denote the embedding
which takes $g\cdot B_+\in \mathcal B$ to the line 
$\left <g\cdot \eta\right >$. 
This is the unique $g \cdot B_+$-stable line in $V$.
We specify points in the projective space $\mathbb P(V)$
using homogeneous coordinates corresponding to
$\mathcal B(\rho)$.

Now let $\mathbf w_0=(i_1,\dotsc, i_N)$ be a fixed reduced expression of $w_0$. 

\begin{lem}\label{l:w0} Let $v\in W$, and let $\mathbf v_+$ be the positive subexpression for 
$v$ in $\mathbf w_0$. The composition 
$$
i\circ \phi_{\mathbf v_+,\mathbf w_0}^{>0}: (\R_{>0})^{\mathbf v_+^c}\to \mathcal R_{v,w_0}^{>0} \to
\mathbb P(V)
$$
is given by polynomials with positive coefficients. 
\end{lem}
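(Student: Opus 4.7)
The plan is to prove Lemma~\ref{l:w0} by right-to-left induction on $r$, showing that $h_r \eta := g_r g_{r+1} \cdots g_N \cdot \eta$ admits an expansion in the canonical basis $\mathcal{B}(\rho)$ whose coefficients are polynomials in $\{t_s : s \in \mathbf v_+^c,\ s \ge r\}$ with non-negative coefficients. The base case $r = N+1$ is immediate since $h_{N+1}\eta = \eta \in \mathcal{B}(\rho)$, and the case $r \in \mathbf v_+^c$, where $g_r = y_{i_r}(t_r)$, follows directly from Remark~\ref{rem:positive1}.

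The delicate case is $r \in \mathbf v_+$, where $g_r = \dot s_{i_r}$. My strategy is to reduce this case to the single claim that $e_{i_r} \cdot h_{r+1} \eta = 0$. Granting this, non-negativity of the matrix entries of $e_{i_r}$ on $\mathcal{B}(\rho)$ (the mechanism behind Remark~\ref{rem:positive1}), combined with the positive polynomial coefficients of $h_{r+1}\eta$ supplied by the inductive hypothesis, precludes any cancellation: every canonical basis element $b$ actually occurring in $h_{r+1}\eta$ must satisfy $e_{i_r} b = 0$. For such an $i_r$-highest $b$ of weight $\mu$ with $n := \langle \mu, \alpha_{i_r}^\vee \rangle \ge 0$, the action of $\dot s_{i_r}$ on the $\alpha_{i_r}$-$\mathfrak{sl}_2$-string through $b$ is string reversal, giving $\dot s_{i_r} b = f_{i_r}^{(n)} b$, a canonical basis element with coefficient $+1$. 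Summing over $b$ yields positive polynomial coefficients for $\dot s_{i_r} \cdot h_{r+1} \eta$, completing the induction.

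The main obstacle is thus the vanishing $e_{i_r} \cdot h_{r+1} \eta = 0$. I would address it by using the fact that $h_{r+1} \cdot B^+$ lies in the Deodhar stratum associated to $v^{(r+1)}$ (the product of the simple reflections in $\mathbf v_+ \cap (r, N]$), giving a factorization $h_{r+1} = u_{r+1} \dot v^{(r+1)} b_{r+1}$ with $u_{r+1} \in U^-$ and $b_{r+1} \in B^+$. Commuting $e_{i_r}$ past $h_{r+1}$ converts the problem to showing $(\mathrm{Ad}(h_{r+1}^{-1}) e_{i_r}) \cdot \eta = 0$. The positive subexpression condition at position $r$ forces $\ell(s_{i_r} v^{(r+1)}) = \ell(v^{(r+1)}) + 1$, so $(v^{(r+1)})^{-1}(\alpha_{i_r})$ is a positive root; consequently the principal term $\mathrm{Ad}((\dot v^{(r+1)})^{-1}) e_{i_r} = \pm e_{(v^{(r+1)})^{-1}(\alpha_{i_r})}$ is a positive-root vector annihilating $\eta$. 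The remaining technical difficulty is controlling the lower-order corrections from $\mathrm{Ad}(u_{r+1}^{-1})$, which live in root spaces $\mathfrak{g}_{\alpha_{i_r} - \gamma}$ for the positive roots $\gamma$ appearing in $u_{r+1}$: one must verify that these also fall into the annihilator of $\eta$ after conjugation by $(\dot v^{(r+1)})^{-1}$. This is done by tracking the root content of $u_{r+1}$ through the recursive formulas $u_r = y_{i_r}(t_r) u_{r+1}$ (for $r \in \mathbf v_+^c$) and $u_r = \dot s_{i_r} u_{r+1} \dot s_{i_r}^{-1}$ (for $r \in \mathbf v_+$), leveraging the positive subexpression structure to ensure every correction is translated into a positive-root direction.
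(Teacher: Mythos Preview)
Your approach differs substantially from the paper's, and as written it has a genuine gap.

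The paper sidesteps your induction entirely: it first treats a \emph{single} reduced expression $\mathbf w_0$ chosen to end in a reduced word for $v$, so that the positive subexpression is the tail and the map becomes $(a_1,\dots,a_k)\mapsto\langle y_{i_1}(a_1)\cdots y_{i_k}(a_k)\,\dot v\cdot\eta\rangle$; positivity is then immediate from $\dot v\cdot\eta\in\mathcal B(\rho)$ (Lemma~\ref{lem:positive2}) together with Remark~\ref{rem:positive1}. Arbitrary reduced expressions of $w_0$ are reached via the subtraction-free braid moves of Section~\ref{s:BraidRels}, clearing denominators in projective coordinates. At no point does one need to analyse an isolated $\dot s_{i_r}$ sandwiched between $y$'s.

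Your argument, by contrast, rests on the vanishing $e_{i_r}\cdot h_{r+1}\eta=0$ for $r\in\mathbf v_+$, and this is where the gap lies. The recursion $u_r=\dot s_{i_r}u_{r+1}\dot s_{i_r}^{-1}$ you propose already presupposes that $u_{r+1}$ has trivial $U^-_{\alpha_{i_r}}$-component (otherwise the conjugate leaves $U^-$), which you do not verify; and even granting it, you never actually show that every correction term in $\mathrm{Ad}(u_{r+1}^{-1})e_{i_r}-e_{i_r}$ is carried by $(\dot v^{(r+1)})^{-1}$ into $\mathfrak n^+$, nor do you address the Cartan contribution that arises when $\alpha_{i_r}$ itself lies in the root support of $u_{r+1}$. ``Tracking the root content\ldots leveraging the positive subexpression structure'' is a plan, not a proof. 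A smaller point: the step $\dot s_{i_r}b=f_{i_r}^{(n)}b\in\mathcal B(\rho)$ for $i_r$-highest $b\in\mathcal B(\rho)$ invokes a stronger compatibility of canonical bases with $\mathfrak{sl}_2$-strings than Lemma~\ref{lem:positive2} (which only covers extremal vectors $\dot w\eta$) and would need an independent reference.
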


\begin{proof}
Consider first a reduced expression of $w_0$ which ends in $v$, i.e.\
consider a reduced expression $s_{i_1} \dots s_{i_k} s_{i_{k+1}} \dots
s_{i_m}$ of $w_0$, where $s_{i_{k+1}} \dots s_{i_m}$
is a reduced expression of $v$  (which is clearly positive distinguished).
Let $\dot v = \dot s_{i_{k+1}} \dots \dot s_{i_m}$.  Then  
$i\circ \phi_{\mathbf v_+,\mathbf w_0}^{>0}$
maps $(a_1, \dots , a_k)$ to 
$\left <y_{i_1}(a_1) \dots y_{i_k}(a_k) \dot v \cdot \eta\right >$. 
By Lemma \ref{lem:positive2}, $\dot v \cdot \eta$ is a canonical basis vector,
and so by Remark \ref{rem:positive1}, the coefficients which express
$\left <y_{i_1}(a_1) \dots y_{i_k}(a_k) \dot v \cdot \eta\right >$ 
in terms of the canonical basis are positive polynomials in the $a_i$'s.

Now let the reduced expression for $w_0$ be arbitrary.  
This reduced expression can be obtained from our previous
reduced expression $s_{i_1} \dots s_{i_k} s_{i_{k+1}} \dots
s_{i_m}$ using braid relations, 
so the results in Section~\ref{s:BraidRels} imply that 
the parameterization in question will change by a sequence of 
substitutions which involve only positive subtraction-free 
rational expressions.  It follows that the 
image of the map $i\circ \phi_{\mathbf v_+,\mathbf w_0}^{>0}$
is given by rational expressions with positive coefficients.
Since we are in projective space, we can clear denominators to 
obtain polynomials with positive coefficients.

\end{proof}

This proves Proposition~\ref{p:main} for simply laced $G$ 
in the case where $w=w_0$. 
We now turn to the case of arbitrary $w$.

\begin{prop}\label{p:mainsimplylaced}
Proposition~\ref{p:main} holds when $G$ is simply laced.
\end{prop}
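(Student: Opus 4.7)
The plan is to reduce to the $w=w_0$ case already established in Lemma~\ref{l:w0}, by extending $\mathbf{w}$ on the left to a reduced expression for $w_0$ and then specializing half of the parameters to $0$. First I would fix a reduced expression $\mathbf{u}=(i_1^{\ast},\dots,i_{N-m}^{\ast})$ for $w_0 w^{-1}$ (where $N=\ell(w_0)$ and $m=\ell(w)$) and form the concatenation $\mathbf{w}_0:=\mathbf{u}\cdot\mathbf{w}$. Since $\ell(w_0w^{-1})+\ell(w)=\ell(w_0)$, this is a reduced expression for $w_0$.

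The key combinatorial observation is that the positive subexpression $\widetilde{\mathbf{v}}_+$ for $v$ in $\mathbf{w}_0$ lies entirely in the last $m$ positions, and under the index shift $r\mapsto r-(N-m)$ agrees with the positive subexpression $\mathbf{v}_+$ for $v$ in $\mathbf{w}$. This is because the Marsh--Rietsch defining conditions $s_{i_{j_1}}\cdots s_{i_{j_l}}s_{i_r}>s_{i_{j_1}}\cdots s_{i_{j_l}}$ for $j_l<r\le j_{l+1}$ only constrain positions $r>j_1$, so they are insensitive to prepending; uniqueness of the positive subexpression then forces the identification of $\widetilde{\mathbf{v}}_+$ with the shift of $\mathbf{v}_+$.

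Granted this, setting $t_r=0$ for $r=1,\dots,N-m$ in $\phi_{\widetilde{\mathbf{v}}_+,\mathbf{w}_0}$ replaces each leading factor $y_{i_r^{\ast}}(t_r)$ by the identity (these positions all lie in $\widetilde{\mathbf{v}}_+^{c}$), so the product collapses to $\phi_{\mathbf{v}_+,\mathbf{w}}$ on the remaining parameters. By Lemma~\ref{l:w0}, the homogeneous coordinates of $i\circ\phi_{\widetilde{\mathbf{v}}_+,\mathbf{w}_0}^{>0}$ are polynomials in all the parameters with positive coefficients. Specialization at $t_1=\cdots=t_{N-m}=0$ only kills monomials involving one of the dropped variables and leaves the surviving monomials with positive coefficients, so the specialized coordinates are polynomials in $(t_r)_{r\in\mathbf{v}_+^{c}}$ with nonnegative coefficients. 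These polynomials represent $i\circ\phi_{\mathbf{v}_+,\mathbf{w}}^{>0}$, and since the latter takes values in $\mathbb{P}^N$, they are not all identically zero.

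The only real subtlety is the combinatorial claim that positive subexpressions are invariant under left-prepending, but this is immediate from the definition. Right-appending $\mathbf{w}_0 := \mathbf{w}\cdot\mathbf{u}$ would be the wrong move: positive subexpressions are rightmost, so the positive subexpression for $v$ in $\mathbf{w}\cdot\mathbf{u}$ would typically migrate into the appended segment, and setting those new parameters to $0$ would not recover $\phi_{\mathbf{v}_+,\mathbf{w}}$.
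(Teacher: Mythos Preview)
Your proposal is correct and follows essentially the same approach as the paper's proof: extend $\mathbf{w}$ on the left by a reduced expression for $w_0 w^{-1}$ to obtain a reduced expression $\mathbf{w}_0$ for $w_0$, observe that the positive subexpression for $v$ sits entirely in the $\mathbf{w}$-tail (so that the first $N-m$ factors are all $y$'s), apply Lemma~\ref{l:w0}, and then specialize the leading parameters to~$0$. The only difference is that the paper outsources the identification of the two parameterizations to \cite[Lemma~4.3]{Rie:TotPosPoset}, whereas you argue it directly from the defining property of the positive subexpression; your observation that the defining inequalities are vacuous on positions $\le j_1$ (so prepending cannot disturb them) together with uniqueness is exactly what is needed.
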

\begin{proof}
Choose $v,w\in W$ with $v \leq w$.
Choose a reduced expression $\mathbf w_0 = (i_1, \dots , i_N)$
for $w_0$ such that $(i_1, \dots , i_r)$ is a reduced expression
for $w_0 w\inv$.  Then as in the proof of Lemma 4.3 in 
\cite{Rie:TotPosPoset}, we have 
\begin{equation*}
\mathcal R^{>0}_{v,w_0} = 
\{y_{i_1}(t_1) \dots y_{i_r}(t_r) g_{r+1} \dots g_N \cdot B^+ \ \vert \
g_{r+1} \dots g_N \cdot B^+ \in 
\mathcal R^{>0}_{v,w} \}.
\end{equation*}
  Here each $g_{r+j}$ is either 
$\dot s_{i_{r+j}}$ or $y_{i_{r+j}}(t_{r+j})$ and the $g$'s give 
a parameterization of 
$\mathcal R^{>0}_{v,w} $.   
It's clear that our parameterization
of 
$\mathcal R^{>0}_{v,w}$ is obtained from the above parameterization of 
$\mathcal R^{>0}_{v,w_0}$ by setting $t_1, \dots t_r$ to $0$.
Since setting certain variables to zero in a positive polynomial 
results in another positive polynomial, 
Proposition~\ref{p:main} in the simply laced case now follows from Lemma \ref{l:w0}.
\end{proof}

\subsection{General type case} 
 
 \bigskip 
 Let $\dot G$ be the simply laced group with automorphism
 corresponding to $G$, as introduced in Section~\ref{s:automorphism}. 
 Note that we have already proved Proposition~\ref{p:main} for $\dot G$.
 Explicitly, we considered in Section~\ref{s:simplylaced} the projective 
 space determined by 
 the $\dot\rho$-representation of $\dot G$, that is, $\mathbb P(V(\dot \rho))$,
 with homogeneous coordinates determined by the canonical basis $\mathcal B(\dot \rho )$. 
 We have seen that for any 
 totally nonnegative cell $\dot {\mathcal R}^{>0}_{\dot v,\dot w}$ with parameterization 
 $\phi_{\dot{\mathbf v}_+,\dot{\mathbf w}}$, the composition $i\circ\phi_{\mathbf v_+,\mathbf w}$
 of this parameterization with the usual embedding,
 $i:\dot G/\dot B^+\to\mathbb P(V(\dot \rho))$, is given by polynomials
 with positive coefficients.
 
To treat the general case, we now identify $G$ with $\dot G^\tau$ and use
all of the notation from Section~\ref{s:automorphism}. For 
any $\bar i\in \bar I$
there is a simple reflection $s_{\bar i}$ in $W$, which is 
represented in $\dot G$ by
$$
s_{\bar i}:=\prod_{i\in\bar i}\dot s_i.
$$ 
In this way any reduced expression $\mathbf w=(\bar i_1,\bar i_2,\dotsc, \bar i_m)$ in $W$ gives rise to a reduced 
expression $\dot{\mathbf w}$ in $\dot W$ of length
$\sum_{k=1}^m |\bar i_k|$, which is determined
uniquely up to commuting elements \cite[Prop. 3.3]{Nanba}.
To a subexpression $\mathbf v$ of $\mathbf w$
we can then associate a unique subexpression 
$\dot {\mathbf v}$ of $\dot{\mathbf w}$ in 
the obvious way. 

\begin{lem}\label{l:folding} Let $\mathbf w$ be a reduced expression for $w$
in $W$ and $v\le w$.
If $\mathbf v_+$ is the positive subexpression for $v$
in $\mathbf w$, then $\mathbf{\dot{v}_+}$ is the positive subexpression for $v$ (now viewed as element of $\dot W$) in $\dot{\mathbf w}$.
\end{lem}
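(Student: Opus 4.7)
The plan is to verify directly that $\dot{\mathbf v}_+$, obtained by unfolding $\mathbf v_+$ in the obvious way, satisfies the defining length-increasing property that uniquely characterizes the positive subexpression for $v$ (viewed in $\dot W$) in $\dot{\mathbf w}$.

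Write $\mathbf v_+ = \{j_1 < \cdots < j_k\}$ and let $u_l := s_{\bar i_{j_1}} \cdots s_{\bar i_{j_l}}$ denote the $l$-th partial product in $W$. Under the identification $G \cong \dot G^\tau$, the reflection $s_{\bar i}$ is represented in $\dot W$ by $\prod_{i \in \bar i} \dot s_i$, a product of $|\bar i|$ pairwise commuting simple reflections (they commute because distinct elements of an orbit $\bar i$ are unconnected in the Dynkin diagram of $\dot G$). Since $\mathbf w$ is reduced in $W$, the cited result \cite[Prop.~3.3]{Nanba} tells us $\dot{\mathbf w}$ is reduced in $\dot W$, giving the length-lifting $\ell_{\dot W}(w) = \sum_k |\bar i_k|$. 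Applying the same statement to the reduced $W$-expression $s_{\bar i_{j_1}}\cdots s_{\bar i_{j_k}}$ of $v$ yields $\ell_{\dot W}(v) = \sum_l |\bar i_{j_l}|$, which is exactly the number of positions of $\dot{\mathbf v}_+$. Hence $\dot{\mathbf v}_+$ is a reduced subexpression of $\dot{\mathbf w}$ expressing $v$, and its $\dot W$-partial products at the boundaries of the selected blocks agree with the $u_l$.

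It remains to verify the length-increase at every position of $\dot{\mathbf w}$ lying in an unselected block $\bar i_r$ with $r \in \mathbf v_+^c$. Fix such $r$, let $l$ be the index with $j_l < r \leq j_{l+1}$, so the $\dot W$-partial product at the start of block $\bar i_r$ is $u_l$. The positivity of $\mathbf v_+$ in $W$ gives $u_l s_{\bar i_r} > u_l$, i.e.\ $\ell_W(u_l s_{\bar i_r}) = \ell_W(u_l) + 1$; applying the length-lifting once more upgrades this to $\ell_{\dot W}(u_l s_{\bar i_r}) = \ell_{\dot W}(u_l) + |\bar i_r|$. Now appending the $|\bar i_r|$ commuting reflections $\dot s_i$ ($i \in \bar i_r$) one at a time changes $\dot W$-length by exactly $\pm 1$ at each step, while the total change across exactly $|\bar i_r|$ steps is $+|\bar i_r|$; a trivial count then forces every step to be a length-increase. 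This verifies the defining property for $\dot{\mathbf v}_+$ in $\dot{\mathbf w}$, and uniqueness of the positive subexpression concludes the proof.

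The only non-formal input is the length-lifting equality $\ell_{\dot W}(u) = \sum_k |\bar i_k|$ for $u = s_{\bar i_1}\cdots s_{\bar i_m}$ reduced in $W$, which is precisely \cite[Prop.~3.3]{Nanba} together with the commutativity of the $\dot s_i$ within each orbit $\bar i$; everything else reduces to the elementary parity/counting observation above.
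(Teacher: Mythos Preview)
Your proof is correct and follows essentially the same approach as the paper's. Both arguments reduce to the length-lifting identity $\ell_{\dot W}(u\,s_{\bar i}) = \ell_{\dot W}(u) + |\bar i|$ whenever $us_{\bar i} > u$ in $W$, and then use commutativity of the $\dot s_i$ within an orbit together with the $\pm 1$ counting observation to force each individual step to be length-increasing. The only organizational difference is that you separate the verification into reducedness of $\dot{\mathbf v}_+$ (handling the selected blocks) and the positivity condition at unselected blocks, whereas the paper runs the same argument uniformly over all positions $j$ of $\mathbf w$; the content is identical.
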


\begin{proof}
Let $\mathbf w = 
(\bar i_1,\bar i_2,\dotsc, \bar i_m)$ be a reduced expression 
for $w$ in $W$  and let 
$v_{(j)}$ denote the product (in order) of all simple generators
of $\mathbf v_+$ which come from $(\bar i_1, \bar i_2,\dotsc,\bar i_j)$.
The fact that $\mathbf v_+$ is positive in $W$ means that 
$v_{(j-1)} < v_{(j-1)} s_{\bar i_j}$ for all $j=1, \dots,n$.
In particular, 
$\ell (v_{(j-1)} s_{\bar i_j}) = \ell(v_{(j-1)})+1$ in $W$.
For the remainder of this proof let $\dot{v}_{(j-1)}$ denote the element
 $v_{(j-1)}$ viewed as an element of $\dot W$.
 Then by the relationship of lengths in $W$ versus $\dot W$,
 $\ell(\dot{v}_{(j-1)} \prod_{i_j \in \bar{i_j}} \dot{s}_{i_j})
   = \ell(\dot{v}_{(j-1)}) + |\bar{i_j}|$ in $\dot W$.
This fact together with the fact that the $\dot{s}_{i_j}$'s 
for $i_j \in \bar{i_j}$ commute with each other implies
that for any $i_j \in \bar{i_j}$, 
$\ell(\dot{v}_{(j-1)} \dot{s}_{i_j}) = \ell(\dot{v}_{(j-1)})+1$
in $\dot W$.
Therefore for any $i_j \in \bar{i_j}$,
$\dot{v}_{(j-1)} \dot{s}_{i_j} > \dot{v}_{(j-1)}$.
Letting $\bar{i_j}=\{i_{j_1},\dots,i_{j_r}\}$, this now shows that
$\dot{v}_{(j-1)} \dot{s}_{i_{j_1}} \dots \dot{s}_{i_{j_{k+1}}} > 
\dot{v}_{(j-1)} \dot{s}_{i_{j_1}} \dots \dot{s}_{i_{j_k}}$ for 
all $0 \leq k \leq r-1$ (note that we are again using 
the commutativity of the simple generators coming from $\bar{i_j}$).  A
little thought now
shows that
$\mathbf{\dot{v}_+}$ is positive in $\mathbf{ \dot{w}}$. 
\end{proof}

\begin{lem}\label{l:folding2} Let $v,w$ be in $W$ with $v\le w$.   
\begin{enumerate}
\item
We have 
$$
\mathcal R_{v,w}^{>0}=\dot{\mathcal R}_{v,w}^{>0}\cap \mathcal B^{\tau}.
$$
In particular the composition $i':\mathcal R_{v,w}\hookrightarrow \dot{\mathcal R}_{v,w}
\to \mathbb P(V(\dot\rho))$ is positivity preserving.
\item
Suppose $\mathbf w=(\bar i_1,\dotsc, \bar i_m)$ is a reduced expression for $w$ in $W$, and $\mathbf v_+=(j_1,\dotsc, j_k)$ is the positive subexpression for $v$. Then we have a commutative diagram,
\begin{equation*}
\begin{CD}
\mathcal R_{v,w}^{>0}& @>\iota>> &\dot{\mathcal R}_{v,w}^{>0}\\
@A\phi^{>0}_{\mathbf v_+,\mathbf w}AA & & @AA\phi^{>0}_{\dot{\mathbf v}_+,\dot{\mathbf w}}A\\
\R_{>0}^{\mathbf v_+^c} &@>\bar \iota>> & \R_{>0}^{\dot {\mathbf v}_+^c},
\end{CD}
\end{equation*}
where the top arrow is the usual inclusion, the vertical arrows are both isomorphisms,
and the map $\iota$  has the form 
$$(t_1,\dotsc, t_k)\mapsto (t_1,\dotsc, t_1, t_2,\dotsc, t_2,\dotsc, t_k),$$
 where each $t_l$ is repeated $|\bar i_{j_l}|$ times on the right hand side. 
\end{enumerate}
\end{lem}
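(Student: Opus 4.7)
The plan is to prove (2) directly by unpacking the two parameterizations, then read off both inclusions of (1) from it.

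For (2), $\phi^{>0}_{\mathbf v_+, \mathbf w}(\mathbf t)$ equals $g_1 \cdots g_m \cdot B^+$ where each $g_r$ is either $\dot s_{\bar i_r}$ or $y_{\bar i_r}(t_r)$; by the definitions in Section~\ref{s:automorphism}, these are respectively $\prod_{i \in \bar i_r} \dot s_i$ and $\prod_{i \in \bar i_r} y_i(t_r)$ inside $\dot G$, where the products may be taken in any order because generators indexed by different elements of a single $\sigma$-orbit commute. Concatenating over $r$ and choosing for $\dot{\mathbf w}$ a compatible ordering produces exactly $\phi^{>0}_{\dot{\mathbf v}_+, \dot{\mathbf w}}(\bar\iota(\mathbf t))$, in which each coordinate $t_l$ is repeated $|\bar i_{j_l}|$ times. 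Lemma~\ref{l:folding} ensures that $\dot{\mathbf v}_+$ is the positive subexpression for $v$ in $\dot{\mathbf w}$, so the right-hand vertical arrow is a valid parameterization in the sense of Theorem~\ref{t:param}. This establishes both the commutativity of the diagram and, in particular, the fact that $\iota$ really does map into $\dot{\mathcal R}^{>0}_{v,w}$.

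Having (2), the forward inclusion of (1), namely $\mathcal R^{>0}_{v,w} \subseteq \dot{\mathcal R}^{>0}_{v,w} \cap \mathcal B^\tau$, is immediate: the image of the right-hand vertical arrow lies in $\dot{\mathcal R}^{>0}_{v,w}$, and is built from elements of $G = \dot G^\tau$, hence is $\tau$-fixed. For the reverse inclusion, let $x \in \dot{\mathcal R}^{>0}_{v,w} \cap \mathcal B^\tau$ and write $x = \phi^{>0}_{\dot{\mathbf v}_+, \dot{\mathbf w}}(\mathbf t')$ by Theorem~\ref{t:param}. Since $\tau$ acts on $\dot G$ by $\dot s_i \mapsto \dot s_{\sigma(i)}$ and $y_i(t) \mapsto y_{\sigma(i)}(t)$, applying $\tau$ to the parameterization amounts to permuting coordinates within each block $\bar i_r$ via $\sigma$. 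Injectivity of $\phi^{>0}_{\dot{\mathbf v}_+, \dot{\mathbf w}}$ then forces the equation $\tau(x) = x$ to mean that the coordinates of $\mathbf t'$ are constant on each $\sigma$-orbit, i.e.\ $\mathbf t' = \bar\iota(\mathbf t)$ for some $\mathbf t$; by (2) this $x$ lies in $\mathcal R^{>0}_{v,w}$. The positivity-preservation of $i'$ stated at the end of (1) then follows by composing (2) with $i:\dot{\mathcal B} \to \mathbb P(V(\dot \rho))$ and invoking Proposition~\ref{p:mainsimplylaced}, which expresses the composite in polynomials with nonnegative coefficients.

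The main delicate step is justifying that $\tau$'s action on the parameter space really does factor through permutations within each $\sigma$-orbit, and nothing more. This hinges on the defining hypothesis on $\sigma$ in Section~\ref{s:automorphism}: since no two indices in the same orbit are joined by an edge of the Dynkin diagram, the simple generators $\dot s_i$ and the root subgroup elements $y_i(t)$ commute freely for $i$ ranging over a fixed block, so permuting a block's internal ordering of $\dot{\mathbf w}$ leaves the image of $\phi^{>0}_{\dot{\mathbf v}_+, \dot{\mathbf w}}$ unchanged. Once this compatibility is in place, both parts of the lemma follow by essentially formal manipulations.
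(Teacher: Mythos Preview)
Your argument is correct, and for part~(2) it is essentially the paper's argument spelled out in more detail: both you and the paper reduce the commutativity of the diagram to Lemma~\ref{l:folding} together with the definitions of $y_{\bar i}(m)$ and $\dot s_{\bar i}$ from Section~\ref{s:automorphism}.

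For part~(1) you take a genuinely different route. The paper quotes Lusztig's result $\mathcal B_{\ge 0}=\dot{\mathcal B}_{\ge 0}\cap\dot{\mathcal B}^\tau$ from \cite{Lus:TotPos94}, observes the obvious inclusion $\mathcal R_{v,w}^{>0}\subset\dot{\mathcal R}_{v,w}^{>0}$, and then uses the two disjoint decompositions $\mathcal B_{\ge 0}=\bigsqcup_{v,w\in W}\mathcal R_{v,w}^{>0}$ and $\dot{\mathcal B}_{\ge 0}=\bigsqcup_{\dot v,\dot w\in\dot W}\dot{\mathcal R}_{\dot v,\dot w}^{>0}$ to force equality cell by cell. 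You instead prove the reverse inclusion directly from the parameterization: for $x=\phi^{>0}_{\dot{\mathbf v}_+,\dot{\mathbf w}}(\mathbf t')$ fixed by $\tau$, you observe that $\tau$ acts on the product $g_1\cdots g_{\dot m}$ by permuting factors within each orbit-block (this uses that the $\dot s_i$ and $y_i$ for $i$ in a single orbit commute), and then injectivity of $\phi^{>0}_{\dot{\mathbf v}_+,\dot{\mathbf w}}$ forces $\mathbf t'$ to be constant on each block, hence in the image of $\bar\iota$. Your approach is more self-contained in that it avoids invoking the global statement from \cite{Lus:TotPos94}, at the cost of a slightly longer argument; the paper's approach is quicker but outsources the work. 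Both are valid, and your direct argument in fact reproves the relevant special case of Lusztig's identity.
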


\begin{proof}
(1) We have $\mathcal B_{\ge 0}=\dot{\mathcal B}_{\ge 0}\cap \dot{\mathcal B}^{\tau}$ by \cite{Lus:TotPos94}. 
Clearly $\mathcal R_{v,w}^{>0}\subset \dot{\mathcal R}_{v,w}^{>0}$. However since
$\mathcal B_{\ge 0}= \bigsqcup_{v,w\in W} \mathcal R_{v,w}^{>0}=\dot{\mathcal B}_{\ge 0}\cap \dot{\mathcal B}^{\tau}$ it follows that 
$\mathcal R_{v,w}^{>0}= \dot{\mathcal R}_{v,w}^{>0}\cap\dot{\mathcal B}^{\tau}$.

(2) This is a consequence of Lemma~\ref{l:folding}.
\end{proof}

We can now combine the two parts of Lemma~\ref{l:folding2} with 
Proposition~\ref{p:mainsimplylaced} 
to prove Proposition~\ref{p:main} for general type $G$.

\begin{proof}[Proof of Proposition~\ref{p:main}]
Firstly, Proposition~\ref{p:mainsimplylaced} gives the map 
$$
i\circ\phi^{>0}_{\dot{\mathbf v}_+,\dot{\mathbf w}}: \R_{>0}^{\dot {\mathbf v}_+^c}\to
\dot{\mathcal R}^{>0}_{v,w}\overset i\hookrightarrow\mathbb P(V(\dot\rho)).
 $$ 
Secondly, for non simply laced $G$ we will use 
the inclusion to projective space 
given by
$i':{\mathcal R}_{v,w}\hookrightarrow\mathbb P(V(\dot\rho))$, which is
positivity preserving by Lemma~\ref{l:folding2} (1). 
And thirdly, by Lemma~\ref{l:folding2} (2), we have that
$$
i'\circ\phi^{>0}_{{\mathbf v}_+,{\mathbf w}}: \R_{>0}^{ {\mathbf v}_+^c}\to
{\mathcal R}^{>0}_{v,w}\overset {i'}\hookrightarrow\mathbb P(V(\dot\rho))
$$
can be rewritten as 
$$i'\circ\phi^{>0}_{{\mathbf v}_+,{\mathbf w}}= i\circ\iota\circ\phi^{>0}_{{\mathbf v}_+,{\mathbf w}}
=(i\circ\phi^{>0}_{\dot{\mathbf v}_+,\dot{\mathbf w}})\circ\bar\iota.
$$
By Proposition~\ref{p:mainsimplylaced} for $\dot G$ 
and the description of $\bar\iota$ in Lemma~\ref{l:folding2} (2)
we see  that $i'\circ\phi^{>0}_{{\mathbf v}_+,{\mathbf w}}$ is given by positive 
polynomials. This proves Proposition~\ref{p:main} for $G$.
\end{proof}



\bigskip

\section{Generalization to partial flag varieties}\label{s:Partial}

In this section we generalize the previous results
to partial flag varieties.

\subsection{Lusztig's decomposition of $\mathcal P^J$}\label{s:strata}

The stratification of $\mathcal B$ into smooth pieces $\mathcal R_{v,w}$ has
an analogue for partial flag varieties introduced by Lusztig in 
\cite{Lus:PartFlag}. 

Consider a triple of Weyl group elements $x,u,w\in W$ with $x\in W^J_{max}$,
$w\in W^J$ and $u\in W_J$. Then 
$\mathcal P^J_{x,u,w}\subset \mathcal P^J$ is defined as the 
set of all $P\in \mathcal P^J$ such that there exist Borel subgroups 
$B_L$ and $B_R$ inside $P$ satisfying
\begin{equation*}
B^+\overset w\To B_L\overset u\To B_R\overset {x\inv w_0}\To B^-.
\end{equation*}
An equivalent characterization of $\mathcal P^J_{x,u,w}$ is 
\begin{equation*}
\mathcal P^J_{x,u,w}=\pi^J(\mathcal R_{x,w u})=\pi^J(\mathcal R_{x u\inv, w}).
\end{equation*}
It is not hard to see that $B_L$ and $B_R$ are uniquely determined
as the Borel subgroups in $P$ `closest to' $B^+$ respectively $B^-$
with regard to their relative position, and the projection maps 
$\mathcal R_{x,wu}\to \mathcal P^J_{x,u,v}$ and $\mathcal R_{x u\inv,w}\to
\mathcal P^J_{x,u,v}$ are isomorphisms. 
In particular $\mathcal P^J_{x,u,w}$ is nonempty if and only
if $x\le w u$, in which case it is smooth of dimension 
$\ell(w)+\ell(u)-\ell(x)$.  

Let us denote the indexing set for this decomposition of $\mathcal P^J$
by $Q^J$. So
\begin{equation*}
Q^J:=\{(x,u,w)\in W^J_{max}\x W_J\x W^J\ |\ x\le wu \}.
\end{equation*} 


\subsection{Totally nonnegative cells in 
$\mathcal P^J$}\label{s:P}

The totally positive and nonnegative parts of 
$\mathcal P^J$ are defined in
\cite{Lus:IntroTotPos} by
\begin{eqnarray*}
\mathcal P^J_{>0}&=&\pi^J(\mathcal B_{>0}),\\
\mathcal P^J_{\ge 0}&=&\pi^J(\mathcal B_{\ge 0}).
\end{eqnarray*} 
Since $\pi^J$ is closed it follows that $\mathcal P^J_{\ge 0}=
\overline{\mathcal P^J_{>0}}$.

We decompose $\mathcal P^J_{\ge 0}$ by intersecting it with the 
strata $\mathcal P^J_{x,u,w}$ from Section~\ref{s:strata}. 
From the definitions and the fact that reduction preserves total
positivity it follows that (\cite[Lemma 3.2]{Rie:PhD}) 
\begin{equation*}
\mathcal P^J_{x,u,w;>0}:=\mathcal P^J_{x,u,w}\cap\mathcal P^J_{\ge 0}
=\pi^J(\mathcal R_{x,wu}^{>0})=\pi^J(\mathcal R_{x u\inv, w}^{>0}).
\end{equation*}
Keeping in mind that 
$\pi^J:\mathcal R_{x,wu}\to\mathcal P^J_{x,u,w}$, say, 
is an isomorphism, we see that 
\begin{equation*}
\mathcal P^{J}_{x,u,w;>0}\cong \mathcal R^{>0}_{x,wu}\cong
\mathcal \R_{>0}^{\ell(w)+\ell(u)-\ell(x)},
\end{equation*}
for any triple $(x,u,w)\in Q^J$.

\begin{thm}\label{t:P}\cite{Rie:TotPosPoset} Let $(x,u,w)\in Q^J$.
Then
$\overline{\mathcal P^J_{x,u,w;>0}}$ is the disjoint union of
${\mathcal P^J_{x,u,w;>0}}$ and some lower-dimensional cells.

\end{thm}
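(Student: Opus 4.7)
The plan is to lift the problem to the full flag variety via $\pi^J\colon \mathcal B\to\mathcal P^J$ and apply Theorem~\ref{t:B}.

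First, I would observe that $\pi^J$ is proper (its fibers are isomorphic to the projective variety $P_J/B^+$), and hence closed in the real topology, a fact already used in Section~\ref{s:P}. Together with the identity $\mathcal P^J_{x,u,w;>0}=\pi^J(\mathcal R^{>0}_{x,wu})$ from that section and Theorem~\ref{t:B}, this yields
$$
\overline{\mathcal P^J_{x,u,w;>0}}\;=\;\pi^J\bigl(\overline{\mathcal R^{>0}_{x,wu}}\bigr)\;=\;\bigcup_{x\le x'\le w'\le wu}\pi^J(\mathcal R^{>0}_{x',w'}).
$$

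The key step will be to show that each image $\pi^J(\mathcal R^{>0}_{x',w'})$ in this union coincides with exactly one of the positive cells $\mathcal P^J_{x'',u'',w'';>0}$ from Section~\ref{s:P}. To pin down the target cell, I would write $w'=w''u''$ uniquely with $w''\in W^J$ and $u''\in W_J$, and take $x''\in W^J_{max}$ to be the maximum-length coset representative canonically determined by $x'$ and $u''$. Using both descriptions $\mathcal P^J_{x'',u'',w''}=\pi^J(\mathcal R_{x'',w''u''})=\pi^J(\mathcal R_{x''(u'')\inv,w''})$, together with the fact that these projections restrict to isomorphisms on the totally positive parts, one verifies that $\pi^J$ carries $\mathcal R^{>0}_{x',w'}$ homeomorphically onto $\mathcal P^J_{x'',u'',w'';>0}$.

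To finish, the resulting union is automatically disjoint, since the cells $\mathcal P^J_{x'',u'',w'';>0}$ form a partition of $\mathcal P^J_{\ge 0}$. The original cell $\mathcal P^J_{x,u,w;>0}$ appears, corresponding to $(x',w')=(x,wu)$. Every other cell in the union is strictly lower-dimensional, because $(x',w')\ne(x,wu)$ in the indexing set forces the strict inequality $\ell(w')-\ell(x')<\ell(wu)-\ell(x)$ and $\pi^J$ does not increase dimension.

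The hard part will be the identification step: precisely matching each pair $(x',w')$ in the closure indexing set with a triple $(x'',u'',w'')\in Q^J$, and verifying surjectivity of $\pi^J$ onto the corresponding positive cell. Because $\pi^J$ has positive-dimensional fibers parameterized by $W_J$, this requires careful bookkeeping using the two different parameterizations $\mathcal R_{x'',w''u''}$ and $\mathcal R_{x''(u'')\inv,w''}$ and the associated coset decompositions in $W$.
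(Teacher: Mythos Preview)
The paper does not prove Theorem~\ref{t:P}; it is quoted from \cite{Rie:TotPosPoset} without argument. So there is no in-paper proof to compare against.

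Your reduction to Theorem~\ref{t:B} via the closed map $\pi^J$ is the natural strategy and is essentially how the result is obtained in \cite{Rie:TotPosPoset}. One claim in your sketch is wrong as stated, however: you say that $\pi^J$ carries $\mathcal R^{>0}_{x',w'}$ \emph{homeomorphically} onto $\mathcal P^J_{x'',u'',w'';>0}$. A dimension count shows this can only happen when $\ell(x')=\ell(x'')$, i.e.\ when $x'$ already lies in $W^J_{max}$; for a generic $x'$ in your closure index set the restriction of $\pi^J$ to $\mathcal R^{>0}_{x',w'}$ has positive-dimensional fibers (in the extreme case $J=I$, every cell collapses to a point). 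What you actually need, and what suffices for the theorem, is that $\pi^J(\mathcal R^{>0}_{x',w'})$ is an \emph{entire} cell $\mathcal P^J_{x'',u'',w'';>0}$---surjectivity, not injectivity. Proving this surjectivity for arbitrary $(x',w')$ is exactly the step you correctly flag as ``the hard part,'' and it is where the real work in \cite{Rie:TotPosPoset} lies; your outline does not yet supply a mechanism for it beyond the two parameterizations recalled in Section~\ref{s:strata}, which only directly handle the cases $x'\in W^J_{max}$ or $w'\in W^J$.
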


\subsection{Proof of Theorem \ref{th:main}}

Finally we are ready to prove Theorem \ref{th:main}.
Let us recall the notion of a CW complex.

In a CW complex $X$, a 
cell is {\it attached} by
glueing a closed $i$-dimensional ball $D^i$ to the $(i-1)$-skeleton
$X_{i-1}$, i.e.\ the union of all lower dimensional cells.
The glueing is specified by a continuous function $f$ from
$\partial D^i = S^{i-1}$ to $X_{i-1}$.  CW complexes are defined
inductively as follows.  Given $X_0$ a discrete space
(a discrete union of $0$-cells), we inductively construct 
$X_i$ from $X_{i-1}$ by attaching some collection of $i$-cells.  The
resulting colimit space $X$ is called a {\it CW complex} provided
it is given the weak topology and the {\it closure-finite} condition
is satisfied for its closed cells.  Recall that the closure-finite
condition requires that every closed cell is covered by a finite
union of open cells.

\begin{proof}
The cell decomposition of 
$\mathcal P^J_{\ge 0}$ has 
only finitely many cells; therefore
the closure-finite condition in the definition of a CW complex
is automatically satisfied.
What we need to do is define the attaching maps for the cells.

Recall from Corollary \ref{c:glue} that for each parameterization of 
a cell 
${\mathcal R_{v,w}^{>0}}$ of $\mathcal B_{\geq 0}$ 
we have a toric variety 
$X_{\mathbf v_+,\mathbf w}$ and 
a map $\tau_{\geq 0}: X_{\mathbf v_+,\mathbf w}^{\ge 0}\to  \overline{\mathcal R_{v,w}^{>0}}$
which is surjective, 
an isomorphism on the strictly positive parts, 
and which takes the boundary of $X_{\mathbf v,\mathbf w}^{\ge 0}$ to the boundary of  $\overline{\mathcal R_{v,w}^{>0}}$.   Since  
$X_{\mathbf v_+,\mathbf w}^{\ge 0}$
is homeomorphic to a closed ball, this provides a glueing map for the cell
${\mathcal R_{v,w}^{>0}}$ 
of $\mathcal B_{\geq 0}$.

To construct the glueing map for 
the cell $P^J_{x,u,w;>0}$ of 
$\mathcal P^J_{\ge 0}$,  we compose the map $\pi^J$ from 
$\mathcal B_{\ge 0}$ to 
$\mathcal P^J_{\ge 0}$ with $\tau_{\geq 0}$.  
Since 
$\pi^J:\mathcal R_{x,wu}^{>0}\to\mathcal P^J_{x,u,w;>0}$ is a homeomorphism,
we obtain a map 
$\pi^J \circ \tau_{\geq 0} $ from 
$X_{\mathbf x_+,\mathbf {wu}}^{\ge 0}\to  \overline{\mathcal P^J_{x,u,w;>0}}$
which again is surjective and  
an isomorphism on the strictly positive parts.   

Now as in the proof of Corollary \ref{c:glue}, it must take
the boundary of 
$X_{\mathbf x_+,\mathbf wu}^{\ge 0}$ to the boundary of 
$\overline{\mathcal P^J_{x,u,w;>0}}$.  
Therefore 
the composition $\tau_{\geq 0} \circ \pi^J$ provides a glueing
map for cells of 
$\mathcal P^J_{\ge 0}$. 

The only thing that remains to check is that the boundary of each
$i$-cell is mapped to the $i-1$-skeleton.  But this follows from
Theorem \ref{t:P}.
\end{proof}

As explained in the Introduction, Theorem \ref{th:main} together with
Theorem \ref{th:Eulerian} from \cite{Williams:poset} implies the following
result.

\begin{cor}
The Euler characteristic of the closure of a cell of 
$(G/P)_{\ge 0}$ is $1$.  
\end{cor}

\bibliographystyle{amsplain}

\begin{thebibliography}{10}

\bibitem{BZ} A.~Berenstein, A.~Zelevinsky, \emph{Tensor product
multiplicities, canonical bases and totally positive varieties},
Invent. math., \textbf{143} (2001), no.~1, 77--128.


\bibitem{Cox} D.~Cox, \emph{Minicourse on Toric varieties}, 
http://www.amherst.edu/~dacox.



\bibitem{Cox1} D.~Cox, \emph{What is a toric variety? Topics in algebraic geometry
and geometric modeling}, Contemp. math., \textbf{334} (2003),
Amer. Math. Soc., Providence,
RI, 203--223.



\bibitem{Deo:Decomp}
V.~Deodhar, \emph{On some geometric aspects of {B}ruhat orderings. {I}.
  {A} finer decomposition of {B}ruhat cells}, Invent. Math. \textbf{79} (1985),
  no.~3, 499--511. \MR{86f:20045}



\bibitem{Fulton} W.~Fulton, \emph{Introduction to toric varieties}, Ann. Math.
Studies, \textbf{131}, Princeton University Press, Princeton, NY, 1993.



\bibitem{KaLus:Hecke2}
D.~Kazhdan and G.~Lusztig, \emph{Schubert varieties and {P}oincar\'e duality},
  Geometry of the Laplace operator (Proc. Sympos. Pure Math., Univ. Hawaii,
  Honolulu, Hawaii, 1979), Proc. Sympos. Pure Math., XXXVI, Amer. Math. Soc.,
  Providence, R.I., 1980, pp.~185--203. \MR{84g:14054}



\bibitem{Lus:CanonBasis}
G.~Lusztig, \emph{Canonical bases arising from quantized enveloping 
algebras}, J. Amer. Math. Soc. 3 (1990), 447--498.



\bibitem{Lus:Quantum}
\bysame, Introduction to Quantum Groups, Progr. in Math. 110, Birkhauser 
Boston, 1993, 341 pages.



\bibitem{Lus:TotPos94}
\bysame, \emph{Total positivity in reductive groups}, Lie theory and
  geometry: in honor of Bertram Kostant (G.~I. Lehrer, ed.), Progress in
  Mathematics, vol. 123, Birkhaeuser, Boston, 1994, pp.~531--568.



\bibitem{Lus:IntroTotPos}
\bysame, \emph{Introduction to total positivity}, Positivity in Lie theory:
  open problems, de Gruyter Exp. Math., vol.~26, de Gruyter, Berlin, 1998,
  pp.~133--145. \MR{MR1648700 (99h:20077)}



\bibitem{Lus:PartFlag}
\bysame, \emph{Total positivity in partial flag manifolds}, Representation
  Theory \textbf{2} (1998), 70--78.



\bibitem{MarRie:ansatz}
R.~J. Marsh and K.~Rietsch, \emph{Parametrizations in flag varieties},
  Representation Theory \textbf{8} (2004), 212--242.

\bibitem{Nanba}
M.~Nanba, \emph{Bruhat order on the fixed-point subgroup by a Coxeter
graph automorphism}, J. Algebra \textbf{285} (2005), no. 2, 470-- 480.


\bibitem{Postnikov}
A.~Postnikov, \emph{Total positivity, Grassmannians, and networks}.
arXiv:math.CO/0609184



\bibitem{PSW}
A.~Postnikov, D.~Speyer, and L.~Williams, \emph{Matching polytopes, toric geometry,
and the non-negative part of the Grassmannian}, 
  arXiv:math.AG/0706.2501.



\bibitem{Rie:MSgen}
K.~Rietsch, \emph{A mirror symmetric construction for $q{H}_{T}^*({G}/{P})$},
  math.AG/0511124, to appear in Adv. Math.



\bibitem{Rie:PhD}
\bysame, \emph{Total positivity and real flag varieties}, Ph.D. thesis, M.I.T.,
  1998.



\bibitem{Rie:CelDec}
\bysame, \emph{An algebraic cell decomposition of the nonnegative part of a
  flag variety}, J. Algebra \textbf{213} (1999), 144--154.



\bibitem{Rie:TotPosPoset}
\bysame, \emph{Closure relations for totally nonnegative cells in {G/P}},
  Mathematical Research Letters \textbf{13} (2006), 775--786.



\bibitem{Sottile} F.~Sottile, \emph{Toric ideals, real toric varieties, and
the moment map}, in Topics in Algebraic Geometry and Geometric Modeling,
Contemp. Math. \textbf{334}, 2003, 225--240.



\bibitem{Springer:AlgGroupBook}
T.~A. Springer, \emph{Linear algebraic groups, {S}econd edition}, Progress in
  Mathematics, vol.~9, Birkh\"auser, Boston, 1998.



\bibitem{Williams:poset}
L.~Williams, \emph{{S}helling totally nonnegative flag varieties},
 J. reine und angewandte Mathematik, \textbf{609}, 2007.



\end{thebibliography}

\def\cprime{$'$}
\providecommand{\bysame}{\leavevmode\hbox to3em{\hrulefill}\thinspace}
\providecommand{\MR}{\relax\ifhmode\unskip\space\fi MR }
\providecommand{\MRhref}[2]{%
  \href{http://www.ams.org/mathscinet-getitem?mr=#1}{#2}
}

\providecommand{\href}[2]{#2}

\end{document}